\DeclareMathOperator*{\esssup}{ess\,sup}
\DeclareMathOperator*{\essinf}{ess\,inf}
\DeclareMathOperator*{\ulim}{lim\uparrow}
\newcommand{\N}{\mathbb{N}}
\newcommand{\R}{\mathbb{R}}
\renewcommand{\leq}{\leqslant}
\renewcommand{\geq}{\geqslant}
\let\old@underbar\underbar
\renewcommand*{\underbar}[1]{%
  \relax\ifmmode
    \underaccent{\bar}{#1}
  \else
    \old@underbar{#1}
  \fi%
}
\newcommand*{\suchas}[1][]{\, \@ifnotmtarg{#1}{\expandafter\csname#1\endcsname}\vert \,}
\def\given{\suchas}
\newlength{\thmbefspace}
\newlength{\thmaftspace}
\newtheoremstyle{thmstyle}%
  {\thmbefspace}%
  {\thmaftspace}%
  {\itshape}%
  {0pt}%
  {\bfseries}%
  {}%
  {0pt}%
  {\thmname{#1}\thmnumber{~#2}\thmnote{~(\boldmath#3\unboldmath)} --- }
\newtheoremstyle{it}
  {3pt}
  {3pt}
  {\itshape}
  {}
  {\bf}
  { -- }
  { }
  {}
\newtheoremstyle{rm}
  {3pt}
  {3pt}
  {}
  {}
  {\bf}
  { -- }
  { }
  {}
\theoremstyle{it}
\newtheorem{theorem}{Theorem}[section]%
\newtheorem{corollary}[theorem]{Corollary}%
\newtheorem{proposition}[theorem]{Proposition}%
\newtheorem{lemma}[theorem]{Lemma}%
\newtheorem{definition}[theorem]{Definition}%
\theoremstyle{rm}%
\newtheorem{example}[theorem]{Example}%
\newtheorem{remark}[theorem]{Remark}%
\newtheorem{notation}[theorem]{Notation}%
\newcommand{\F}{\mathcal{F}}
\newcommand{\T}{\mathcal{T}}
\renewcommand{\S}{\mathcal{S}}
\let\thebibliography@old\thebibliography
\let\endthebibliography@old\endthebibliography
\renewenvironment{thebibliography}[1]{%
  \let\refname@old\refname%
  \let\bibname@old\bibname%
  \renewcommand{\refname}{\refname@old\phantomsection}%
  \renewcommand{\bibname}{\refname@old\phantomsection}%
  \thebibliography@old{#1}%
  \renewcommand{\bibname}{\bibname@old}%
  \renewcommand{\refname}{\refname@old}%
}{%
  \endthebibliography@old
  \renewcommand{\refname}{\refname@old}%
  \renewcommand{\bibname}{\refname@old}%
}
\newcommand*{\aname}[1]{\textsc{#1}}
\newcommand*{\atitle}[2][]{%
  \@ifmtarg{#1}{
    \emph{#2}%
  }{%
    \ifthenelse{%
      \equal{#1}{en}%
    }{%
      \def\myLanguageArgument{english}%
    }{%
      \def\myLanguageArgument{#1}%
    }%
    \foreignlanguage{\myLanguageArgument}{\emph{#2}}%
  }%
}
\title{\texorpdfstring{\aname{Dynkin}}{Dynkin} games in a general framework}
\author{\texorpdfstring{%
Magdalena \aname{Kobylanski} \footnote{ (LAMA - UMR 8050) -- Universit\'e Paris~Est \newline{}\href{mailto:magdalena.kobylanski@univ-mlv.fr}{\nolinkurl{magdalena.kobylanski@univ-mlv.fr}}}%
\and{}%
  Marie-Claire \aname{Quenez}\footnote{Laboratoire de Probabilit\`es et Mod\`eles Al\'eatoires (L.P.M.A.) -- Universit\'e Denis \aname{Diderot} -- Paris~7 / Inria\newline{}\href{mailto:quenez@math.jussieu.fr}{\nolinkurl{quenez@math.jussieu.fr}}}%
\and{}%
  Marc \aname{Roger de Campagnolle}\footnote{Laboratoire de Probabilit\`es et Mod\`eles Al\'eatoires  (L.P.M.A.) -- Universit\'e Denis \aname{Diderot} -- Paris~7\newline{}\href{mailto:marcrdc@math.jussieu.fr}{\nolinkurl{marcrdc@math.jussieu.fr}}}%
}{%
  Magdalena KOBYLANSKI \&{} Marie-Claire QUENEZ \&{} Marc ROGER de CAMPAGNOLLE%
}}
\date{\today}
\begin{document}
\maketitle%
\setlength{\parskip}{\medskipamount}%
\begin{abstract}
We revisit the Dynkin game problem in a general framework and relax some assumptions. The payoffs and the criterion are expressed in terms of  families of random variables indexed by stopping times. We construct two nonnegative supermartingales  families $J$ and $J'$ whose finitness is equivalent to the Mokobodski's condition. Under some weak right-regularity assumption on the payoff families, the game is shown to be fair and $J-J'$ is shown to be the common value function. 
Existence of saddle points is derived under some weak additional assumptions.
All the results are written in terms of random variables and are proven by using  only classical results of  probability theory.
\end{abstract}\bigskip
\emph{Keywords:}  Dynkin Games, Optimal stopping\medskip\\
\emph{AMS 2010 subject classifications:} primary: 60G40.
\section*{Introduction}

In this paper, the Dynkin game problem is revisited in the general framework of families of random variables indexed by stopping times.
The criterion is given for each pair of stopping times $(\tau, \sigma)$ by the random variable
\[I_{0} \left( \tau, \sigma \right):=E[\xi ( \tau ) {\bf 1}_{\left\{ \tau \leq \sigma \right\}} + \zeta ( \sigma ) {\bf 1}_{\left\{ \sigma < \tau \right\}}].\]
In full generality, 
$\xi(\tau)$ and $\zeta(\sigma)$ are random variables, respectively $\mathcal{F}_{\tau}$\,-measurable and $\mathcal{F}_{\sigma}$\,-measurable. 
Studying the Dynkin game problem consists in proving that, under suitable conditions, the game is fair that is, 
\[\inf_{\sigma}\sup_{\tau}I_{0} \left( \tau, \sigma \right)=\sup_{\tau}\inf_{\sigma}I_{0} \left( \tau, \sigma \right),\]
in characterizing this common value function,  and finally in proving the existence of saddle points.

The Dynkin game problem has been largely studied in the literature in the framework of processes. The first results (cf.   \cite{B1}, \cite{B2}, \cite{A}) were obtained under the Mokobodski condition, which stipulates that the payoffs are separated by the difference of two nonegative a.s. finite supermartingales. Existence of a value for the game is then obtained by supposing only  right-upper semi continuity on $(\xi_t)$ and $(-\zeta_t)$ (cf. \cite{A}). Yet, checking Mokobodski's condition appears as  a difficult question.   
We stress on that this approach relies on sophisticated results of the General Theory of Processes and Optimal Stopping Theory.

Note that, in the literature, there are other works related to Dynkin Games, for instance  \cite{CK}, \cite{HHO}. We refer to section \ref{section.moko} for details. 

Recently, Kobylanski and Quenez  in \cite{KQ}  have revisited the optimal stopping problem in the case of a reward given by a {\em family} of {\em random variables} indexed by stopping times. This notion is very general and includes the case of processes as a particular case.
This setup has appeared as relevant and appropriate as it allows, from the one hand, to release some hypotheses made on the reward, and on the second hand, to make simpler  proofs using simpler tools.%

In the present work, the setup of families of random variables indexed by stopping times allows to solve the Dynkin game problem under very weak assumptions by using only classical tools of Probability Theory. 


 \vspace{0,2cm}

The paper is organised as follows. In section 1, we introduce the Dynkin game problem. In our set up, the payoffs (or rewards) 
 $\xi:= \left( \xi(\theta), \theta\in \T\right)$ and $\zeta:= \left( \zeta(\theta), \theta\in \T\right)$ are given by families of integrable 
 random variables indexed by stopping times, which satisfy some natural compatibility conditions. In section 2, from $\xi$ and 
 $\zeta$, we construct two $[0, + \infty]$-valued  supermartingale families $J= \left( J(\theta), \theta\in \T\right)$ and 
 $J'= \left( J'(\theta), \theta\in \T\right)$. These two families  satisfy $J= \mathcal{R}(J'+\xi)$ and $J'=\mathcal{R}(J-\zeta)$ 
 where $\mathcal{R}$ is the Snell envelope operator.
In the case of processes, this construction is classical (see for example \cite{A}) and is done under Mokobodski's condition.
In the present work, we do not need any condition of this type in order to define $J$ and $J'$. When $J$ is a.s. finite 
(or equivalently $J'$ is  a.s. finite), the difference $J-J'$ is well defined and is proven to satisfy $\xi \le J-J'\le \zeta$. 
 In section 3, under the previous assumption, we prove that $J-J'$ is the value of the game, first when optimal stopping times 
 do exist, and second  when $\xi$ and $-\zeta$ are right-upper semicontinuous
along stopping times in expectation.
In section 4, under some additional  assumptions, we derive the existence of saddle points. In section 5, we show that  condition 
$J$ a.s. finite not only implies but is also equivalent to  Mokobodski's condition, and some complementary results are provided. 
At last, comes the Appendix. In Appendix A, we first briefly recall some results of Kobylanski and Quenez (2012)  \cite{KQ} which 
are used in this paper. Second, we provide two lemmas used to prove the existence result. Finally, in Appendix B, we apply our results to the case of processes.

We point out that, whereas in the previous works, the proof of the existence of saddle points relies on some highly sophisticated tools 
of the General Theory of Processes, the one given in this paper does not require any  over prerequisite  than those given in the 
Appendix  and is only done by using classical probability results.  Also, condition $J(0) < + \infty$ may be seen as easier to check than  
Mokobodski's condition. Finally, some conditions on the payoffs are relaxed in comparison with previous results (cf. \cite{A}).

\vspace{0,5cm}

We introduce some notation. Let ${\mathbb F}=(\Omega, \F, (\F_t)_{{0\le t\le T}},P)$ be a probability space equipped with a filtration 
$(\F_t)_{{0\le t\le T}}$ satisfying the usual conditions of right continuity and augmentation by the null sets  of ${\cal F}= {\cal F}_T$. 
We suppose that ${\cal F}_0$ contains only sets of probability $0$ or $1$.  The time horizon is a fixed constant  $T$ in $]0,\infty[$.
We denote by $\T$ the collection of stopping times of
${\mathbb{F} }$ with values in $[0 , T]$.  More generally, for any
stopping time $S$, we denote by $\T_{S}$  (resp.  $\T_{S^+}$) the class of stopping times
$\theta\in \T$ with $\theta\geq S$ a.s.\, (resp. $\theta>S $ a.s. on $\{S<T\}$ and $\theta=T$ a.s. on $\{S=T\}$).

We also define $\T_{[S, S^{'}]}$ the set of $\theta\in \T$ with $S \leq \theta \leq S^{'}$ a.s. 

We use the following notation: for real valued  random variables $X$ and $X_n$, $n\in$ $\N$,  ``$X_n\uparrow X$'' stands for 
``the sequence $(X_n)_{n \in \N}$ is nondecreasing and converges to $X$ a.s.''.




\section{Dynkin games}

In this section, we present the Dynkin game problem in the framework of families of random variables indexed by stopping times. We first introduce some notation and definitions.
\subsection{Definitions and notation}

\begin{definition}\label{admi}
A  family of $\bar \R$-valued random variables $\left( \phi(\theta), \theta\in \T\right)$ is said to be {\em admissible} if it satisfies the following conditions 

\hspace{1cm}\begin{tabular}{ll}
1)& for all
$\theta\in \T$\,, $\phi(\theta)$ is an $\F_\theta$-measurable random variable (r.v.),\\
 2)& for all
$\theta,\theta'\in \T$\,, $\phi(\theta)=\phi(\theta')$ a.s. on
$\{\theta=\theta'\}$. \\
 \end{tabular}

\end{definition}
In the sequel, such a family $\left( \phi(\theta), \theta\in \T\right)$ is identified with the map $\phi$ : $\theta$ 
$\mapsto$ $\phi(\theta)$ from $\T$ into the set of random variables. 

\begin{remark}
The notion of admissible families includes the case of processes as a particular case. 
Indeed, if $(\phi_t)_{t\in \R_+}$ is a progressive process, the family of random variables 
$\bar \phi=(\bar \phi(\theta), \theta\in \T)$ defined by $\bar \phi(\theta)=\phi_\theta$ is admissible. 
\end{remark}

Let us introduce some notation and definitions.
\begin{notation} The set of admissible families is denoted by $\mathcal A$.\\
The relation \;$\ge$\; is defined on $\mathcal A$ in the following way.\\For $\phi, \phi' \in \mathcal{A}$,   $\phi \geq \phi'$ if, for each $\theta$ $\in$ $\mathcal{T}$\,, $\phi (\theta) \geq \phi'(\theta)$ a.s.
\\
The relations \;$\leq$\; and \;$=$ \; on $\mathcal{A}$ are defined in the same way.
 \\
 We denote by $0$, the family $\phi$ in $\mathcal{A}$ such that $\phi(\theta)=0$ a.s. for each $\theta\in \mathcal{T}$.
 \\
A family $\phi \in \mathcal{A}$ is \emph{non negative} if $\phi\ge0$. 
\\
A sequence $(\phi_n)_{n\in \N}$ in $\mathcal{A}$ is \emph{non decreasing} if $\phi_n\le \phi_{n+1}$ for each $n\in \N$.
\\
For $x\in \R$, $x^+=\max(x,0)$ and $x^-=-\min(x,0)$.
\\
For $\phi \in \mathcal{A}$,    $\phi^+$ denotes the family  $\displaystyle \left((\phi(\theta))^+, \theta\in \mathcal{T}\right)$,  
and  $\phi^-$ denotes the family  $\displaystyle \left((\phi(\theta))^-, \theta\in \mathcal{T}\right)$.\\
A family $\phi \in \mathcal{A}$ is said to be \emph{integrable} if, for each $\theta$ $\in$ $\mathcal{T}$, $\phi (\theta)$ is integrable.
\end{notation}
We define the following subsets of $\mathcal{A}$:
 \[ \mathcal{S}=\left\{ \phi \in \mathcal{A}, \; E\left[\esssup_{\theta\in \mathcal{T}} |\phi(\theta)|\right]<+\infty \right\}\]
\[ \mathcal{S}^+ =\left\{   \phi \in \mathcal{A}, \; \phi ^+\in \mathcal{S}\right\}, \qquad \mathcal{S}^- =\left\{   \phi \in \mathcal{A}, \;  \phi ^-\in \mathcal{S}\right\}.\]

 \begin{definition}\label{def.mart}
 An admissible family $\phi=(\phi(\theta) , \; \theta \in \mathcal{T})$, such that $\phi^- $ is integrable, is said to be a {\em supermartingale family} (resp. a {\em martingale family}) if  for any 
$\theta, \theta'$ $ \in$ $\mathcal{T}$ such that $\theta \geq \theta'$ a.s., 
\[
E[\phi(\theta) \, |\,{\cal F}_{ \theta' }] \leq \phi(\theta') \quad
\mbox{a.s.,}  \mbox{ (resp.} \quad E[\phi(\theta) \, |\,{\cal F}_{ \theta' }] = \phi(\theta') \quad
\mbox{a.s.).}
\]
\end{definition}

\begin{remark} Let $\tau$, $\sigma$ $\in$ $\mathcal{T}$ such that $\tau \leq \sigma$ a.s. and $\phi \in \mathcal{A}$. 
One can easily show that the family $\phi$ restricted to $\mathcal{T}_{[\tau, \sigma]}$\,, that is $\left(\phi(\theta), \,\theta \in \mathcal{T}_{[\tau, \sigma]} \right)$, 
is a martingale family if  and only if  $\Big( \phi \big((\theta \vee \tau) \wedge \sigma \big),\, \theta \in \mathcal{T} \Big)$ is a martingale family.
\end{remark}

Let us now recall some notation and definitions related to the optimal control problem in the framework 
of admissible families, studied in \cite{KQ}. Let $\phi= (\phi(\theta),\theta\in \T)$ be an admissible family in $\mathcal{S}^-$, called {\em reward} family.
For each $\theta \in \T$, the value function at time $\theta$ is defined by
\begin{equation*}\label{vss}
v(\theta):=\esssup_{\tau \in \T_{\theta}} E[\phi(\tau) \, |\,\F_{\theta}].
\end{equation*}
The family of random variables $v= (v(\theta),\theta\in \T)$, called {\em value function family}, is clearly admissible.

We now introduce the Snell envelope operator $\mathcal{R}$, defined in the framework of admissible families.
\begin{definition} \label{definitionSnell} For each family $\phi$ $\in$  $\mathcal{A}$,  the smallest supermartingale family greater or equal to $\phi$, is called, when it exists, the {\em Snell envelope family} of $\phi$, and is denoted by $\mathcal{R}(\phi)$.
\end{definition}

For each $\phi\in \mathcal{S}^-$, the value function family $v$ associated to $\phi$ can be shown to 
be equal to the Snell envelope family of $\phi$, that is  $v=\mathcal{R}(\phi)$. The Snell envelope operator 
$\mathcal{R}$ is thus well defined on $\mathcal{S}^-$, and valued in the set of supermartingale families.

Note that in \cite{KQ}, the optimal stopping problem is solved for a \emph{nonnegative} $\phi$ $\in$ $\mathcal{A}$. 
Yet, by translation, all the results do apply to $\phi\in \mathcal{S}^-$. This translation argument is detailed in 
Appendix \ref{subsec.opt}, as well as  the main results of \cite{KQ}.

We now introduce the Dynkin game problem.
\subsection{The Dynkin game problem}

Throughout the paper, $\xi = (\xi(\theta) , \; \theta \in \mathcal{T})$ and $\zeta= (\zeta(\theta) , \; \theta \in \mathcal{T})$ 
are two integrable admissible families such that $\xi \in \mathcal{S}^-$ and $\zeta\in \mathcal{S}^+$. We suppose that 
$\xi ( T ) = \zeta (T) = 0$ a.s. Actually, this last condition is not a restriction (see Remark \ref{re} below).

We consider the classical Dynkin game with two players. The rule of the game is as follows. Each of the players has to choose 
a stopping time, denoted by $\tau$ for the first player and $\sigma$ for the second one. The game stops at $\tau \wedge \sigma$.
 On $\{ \tau \leq \sigma \}$, the second player pays the amount 
$\xi(\tau)$ to the first one and, on $\{ \sigma < \tau \}$, the first player pays the amount $-\zeta(\sigma)$ to the second one. 
In other words, at time $\tau \wedge \sigma$, the first player receives the amount 
$X=\xi ( \tau ) {\bf 1}_{\left\{ \tau \leq \sigma \right\}} + \zeta ( \sigma ) {\bf 1}_{\left\{ \sigma < \tau \right\}}$ 
and the second one receives $-X$. The criterion at time $0$ for the strategy $(\tau, \sigma)$ is defined by the expectation at time 0 of $X$, namely,
\[I_0 \left( \tau, \sigma \right):=E[\xi ( \tau ) {\bf 1}_{\left\{ \tau \leq \sigma \right\}} + \zeta ( \sigma ) {\bf 1}_{\left\{ \sigma < \tau \right\}}].\]

At time $0$, the goal of the first (resp. second player) is to maximize (resp. minimize) this criterion. 
Now, the players are unwilling to risk. Thus, the first one wants to find a strategy $\tau$ which maximizes the quantity
$\inf_{\sigma} I_{0} \left( \tau, \sigma \right)$. His value function at time $0$ is given by 
$\underbar{V} ( 0)  \coloneqq \adjustlimits \sup_{\tau \in \mathcal{T}} \inf_{\sigma \in \mathcal{T}} I_{0} \left( \tau, \sigma \right)$.

The second one wants to find a strategy $\sigma$ which minimizes the quantity $\sup_{\tau} I_{0} \left( \tau, \sigma \right)$. His value 
function at time $0$ is given by
$
\bar{V} ( 0 ) \coloneqq \adjustlimits \inf_{\sigma \in \mathcal{T}_{0}} \sup_{\tau \in \mathcal{T}} I_{0} \left( \tau, \sigma \right). \label{vdessous0}
$

We now make the problem dynamic. For each $\theta$ $\in$ $\mathcal{T}$\,, the \emph{criterion at time $\theta$}  
for a strategy $(\tau, \sigma)$ $\in $ $\mathcal{T}_\theta^2$ is defined by
\begin{equation}\label{criterion}
I_{\theta} \left( \tau, \sigma \right) \coloneqq E \left[ \xi ( \tau ) {\bf 1}_{\left\{ \tau \leq \sigma \right\}} + 
\zeta ( \sigma ) {\bf 1}_{\left\{ \sigma < \tau \right\}} \given \mathcal{F}_{\theta} \right],
\end{equation}
the \emph{first}  or \emph{lower} value function at time $\theta$ is given by
\begin{equation}
\underbar{V} ( \theta ) \coloneqq \adjustlimits \esssup_{\tau \in \mathcal{T}_{\theta}} \essinf_{\sigma \in \mathcal{T}_{\theta}} I_{\theta} \left( \tau, \sigma \right),\label{vdessus}
\end{equation}
and, the \emph{second} or \emph{upper} value function at time $\theta$ is given by
\begin{equation}
\bar{V} ( \theta ) \coloneqq \adjustlimits \essinf_{\sigma \in \mathcal{T}_{\theta}} \esssup_{\tau \in \mathcal{T}_{\theta}} I_{\theta} \left( \tau, \sigma \right). \label{vdessous}
\end{equation}
For each $\theta \in \mathcal{T}$\,,  inequality $\underbar{V} ( \theta ) \leq \bar{V} ( \theta )$ a.s. clearly holds. \vspace{2pt}
\\
The game is considered to be \emph{fair} if $\underbar{V} ( \theta ) = \bar{V} ( \theta )$ a.s. and this quantity is then refered as the \emph{common} 
value function, or the \emph{value of the game} at time $\theta$.

We now introduce the following definition.
\begin{definition}
Let $\theta \in \mathcal{T}$. A pair $\left( \hat{\tau}, \hat{\sigma} \right) \in \mathcal{T}_{\theta}^2$
 is called a \emph{$\theta$-saddle point} if, for each $\left( \tau, \sigma \right) \in \mathcal{T}_{\theta}^2$:
\begin{equation}\label{saddlepoint}
I_{\theta} \left( \tau, \hat{\sigma}) \leq I_{\theta} \left( \hat{\tau}, \hat{\sigma} \right) \leq I_{\theta} ( \hat{\tau}, \sigma \right) \quad \mbox{a.s.}
\end{equation}
\end{definition}

In the study of the Dynkin game problem, the first aim is to provide some sufficient conditions under which 
the game is fair and, in this case, to characterize the common value function. The second aim is to address the question of the existence of saddle points.

\begin{remark}
By classical results on game problems, for each $\theta \in \mathcal{T}$\,, a pair $\left( \hat{\tau}, \hat{\sigma} \right)$ is 
a $\theta$-saddle point if and only if $\underbar{V} ( \theta ) = \bar{V} ( \theta )$ a.s. and the essential infimum in~\eqref{vdessous} 
and the essential supremum in~\eqref{vdessus} are respectively attained at $\hat{\sigma}$ and $\hat{\tau}$.
Hence, if, at initial time $\theta$, $\left( \hat{\tau}, \hat{\sigma} \right)$ is a $\theta$-saddle point, then $\hat{\tau}$ is an optimal 
strategy for the first player and $\hat \sigma$ is an optimal strategy for the second one.
\end{remark}

\begin{remark}\label{re} One could ask if the condition $\xi(T)=\zeta(T)=0$ is restrictive?\\
First, as  the criterion does not depend on the terminal reward $\zeta (T)$,  the assumption $\xi ( T ) = \zeta (T)$ is clearly not restrictive. 

Second, the additional assumption that $\xi ( T ) = \zeta (T)=0$ is no more restrictive. Let us show this assertion.
Let $\xi$ and $\zeta$ be two general integrable families such that $\xi ( T )$ $= \zeta (T)$, but not necessarily equal to $0$. 
Let us define the integrable families $\xi'$ and $\zeta'$ in $\mathcal{A}$ by 
 \[ \forall \theta \in \mathcal{T}, \quad \xi' ( \theta ) \coloneqq  \xi ( \theta ) - E \left[ \xi ( T ) \given 
 \mathcal{F}_{\theta} \right] \quad \text{and} \quad \zeta' ( \theta ) \coloneqq  \zeta ( \theta ) - E \left[ \xi ( T ) \given \mathcal{F}_{\theta} \right]. \]
We clearly have $\xi' \left( T \right) = \zeta' \left( T \right) = 0$ a.s. For every $\theta \in \mathcal{T}$ and every 
$\left( \tau, \sigma \right) \in \mathcal{T}_{\theta}^2$, the criterion $I_{\theta} \left( \tau, \sigma \right)$ associated to $\xi$ and $\zeta$ can be written:
\[ I_{\theta} \left( \tau, \sigma \right) = E \left[ \xi' \left( \tau \right) {\bf 1}_{\left\{ \tau \leq \sigma \right\}} + 
\zeta' \left( \sigma \right) {\bf 1}_{\left\{ \sigma < \tau \right\}} \given \mathcal{F}_{\theta} \right] +  E \left[ \xi ( T ) \given \mathcal{F}_{\theta} \right]. \]
As $E \left[ \xi ( T ) \given \mathcal{F}_{\theta} \right]$ does not depend of the strategies $(\tau,\sigma)$, 
solving the game problem associated with payoffs $\xi$ and $\zeta$ reduces to solving the game associated to the integrable payoffs families $\xi'$ and $\zeta'$.  

Of course, in order to have $\xi' \in \mathcal{S}^-$ and $\zeta'\in \mathcal{S}^+$, we must suppose that $\xi$, $\zeta$
 satisfy
\[ E \left[ \esssup_{\theta \in \mathcal{T}} \Big(  \xi ( \theta ) - E \left[ \xi ( T ) \given \mathcal{F}_{\theta} \right] \Big)^- \right]
 < +\infty \quad \text{and} \quad E \left[ \esssup_{\theta \in \mathcal{T}} \Big(  \zeta ( \theta ) -E \left[ \xi ( T ) \given \mathcal{F}_{\theta} \right]\Big)^+ \right] < +\infty.\]
Note that this condition is satisfied when $\xi \in \mathcal{S}^-$, $\zeta\in \mathcal{S}^+$ and $\xi (T) \in L^p$ with $p>1$, which is a 
classical assumption made in the literature (see \cite{CK}, among others).

\end{remark}

\section{Preliminary results}
In this section, we first provide the construction of two $[0, + \infty]$-valued supermartingale families $J$ and $J'$ such that 
$J=\mathcal{R}(J'+\xi)$ and $J'= \mathcal{R}(J-\zeta)$. In the case of processes, this construction is classical (see for example \cite{A}) 
and is done under Mokobodski's condition on $\xi$ and $\zeta$, which  stipulates that there exists two a.s. finite nonnegative supermartingale
 families $H$ and $H'$ such that  $\xi\le H-H'\le \zeta$.
In the present work, we do not need any condition of this type in order to define $J$ and $J'$. 
We show that when  $J$ is a.s. finite, or equivalently,   $J'$ is a.s. finite,   $J-J'$ is a well defined admissible family that satisfies $\xi\le J-J'\le \zeta$.
\subsection*{Construction of \boldmath$ J $ and $ J'$\unboldmath}
For each $ \theta \in \T$\,, set
\[ J_0 ( \theta ) \coloneqq 0 \quad \text{and} \quad J'_0 ( \theta ) \coloneqq 0 \]
and, let us introduce for each $n \in \N$\,,
\begin{align}
J_{n+1} ( \theta ) &\coloneqq \esssup_{\tau \in \T_{\theta}} E \left[ J'_n \left( \tau \right) + \xi ( \tau ) \given \F_{\theta} \right], \label{znplus} 
\\
J'_{n+1}( \theta ) &\coloneqq \esssup_{\sigma \in \T_{\theta}} E \left[ J_n \left( \sigma \right) - \zeta ( \sigma ) \given \F_{\theta} \right], \label{znmoins}
\end{align}
which are well defined by the following lemma.
\begin{lemma}\label{positif}
For each $n \in \N$ and each $\theta \in \T$\,, the random variables $J_n ( \theta )$ and $J'_n ( \theta )$ are well defined 
and nonnegative, namely $[0, + \infty]$-valued. Moreover, the families $J_n= \left( J_n(\theta), \,\theta \in \T \right)$ and 
$J'_n= \left( J'_n(\theta), \,\theta \in \T \right)$ are admissible and satisfy
\[ J_{n+1} = {\cal R} (J'_n + \xi) \quad \text{and} \quad J'_{n+1} = {\cal R} (J_n - \zeta). \]
In other words, the families $J_{n+1}$ and $J'_{n+1}$ are the smallest supermartingale families greater (almost surely) than $ J'_n + \xi $ and respectively $ J_n -\zeta$.
\end{lemma}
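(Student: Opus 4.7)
The plan is to proceed by induction on $n \in \N$. The base case $n=0$ is immediate: the constant families $J_0 \equiv 0$ and $J'_0 \equiv 0$ are trivially admissible and $[0,+\infty]$-valued. For the inductive step, I assume that $J_n$ and $J'_n$ are admissible $[0,+\infty]$-valued families and I establish the three claims for $J_{n+1}$; the argument for $J'_{n+1}$ is strictly symmetric, swapping the roles of $\xi$ and $-\zeta$ and using the second bound in~\eqref{integ}.

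First I check that the conditional expectations in~\eqref{znplus} are well-defined. Since $J'_n(\tau) \geq 0$ by the induction hypothesis,
\[ \bigl( J'_n(\tau) + \xi(\tau) \bigr)^- \leq \xi(\tau)^- \leq \esssup_{\theta \in \T} \xi(\theta)^- , \]
whose expectation is finite by~\eqref{integ}. Consequently $J'_n + \xi$ is an admissible family whose negative part is uniformly dominated by an integrable random variable, so the Snell envelope result recalled in the Appendix applies. Invoking it gives at once that $\mathcal{R}(J'_n + \xi)$ is admissible, coincides with $\esssup_{\tau \in \T_\theta} \E[(J'_n + \xi)(\tau) \given \F_\theta]$ by definition of the Snell envelope operator, and is the smallest supermartingale family dominating $J'_n + \xi$. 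Comparing this with~\eqref{znplus} yields simultaneously $J_{n+1} = \mathcal{R}(J'_n + \xi)$ and the admissibility of $J_{n+1}$.

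Nonnegativity of $J_{n+1}(\theta)$ is then obtained by testing the essential supremum against the particular choice $\tau = T \in \T_\theta$. Since $\xi(T) = 0$ by assumption and $J'_n(T) \geq 0$ by induction,
\[ J_{n+1}(\theta) \geq \E\bigl[ J'_n(T) + \xi(T) \given \F_\theta \bigr] = \E\bigl[ J'_n(T) \given \F_\theta \bigr] \geq 0 \quad \as , \]
so that $J_{n+1}$ is indeed $[0,+\infty]$-valued, which closes the induction. The only delicate point to be wary of is that $J_n$ and $J'_n$ may a priori take the value $+\infty$, but this does not prevent the use of the Snell envelope operator: the operator only requires a uniform integrable bound on the negative part of its argument, which is provided for free by the nonnegativity propagated by the induction together with~\eqref{integ}. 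Once this is observed, the whole statement reduces to a single application of the Snell envelope theorem at each stage, with no further work required.
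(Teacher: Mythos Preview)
Your proof is correct and follows essentially the same route as the paper: induction on $n$, well-definedness via the uniform bound $(J'_n+\xi)^- \leq \esssup_\theta \xi(\theta)^-$ from~\eqref{integ}, nonnegativity by testing $\tau=T$, and admissibility together with the Snell envelope identification by appeal to the Appendix. Your explicit remark that the possibility $J'_n(\tau)=+\infty$ does not obstruct the argument is a nice clarification that the paper leaves implicit.
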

\begin{proof}{}
Let us show this property by induction. First, it clearly holds for $J_0$ and $J_0'$ since they are equal to $0$. Let us suppose 
that for a fixed $n \in \N$, for each $\theta$ $\in$ $\T$\,, $J_n(\theta)$ and $J'_n(\theta)$ are well-defined $\bar{\R}^+$-valued random variables. 
We then show that this property still holds for $n + 1$. Let $\theta \in \T$. Since, for every $\tau \in \T_{\theta}$, $J'_n ( \tau ) \geq 0$ a.s.\,and, since by $\xi \in \S^-$, that is 
\[ E \left[ \esssup_{\theta \in \T} \left(  \xi ( \theta ) ^- \right)\right]  < +\infty, \]
the random variable $J_{n+1} ( \theta )$ is well defined by (\ref{znplus}). Using \eqref{znplus}, the induction
 hypothesis (non-negativity of $J'_n$) and the equality $\xi ( T ) = 0 $ a.s., we derive that
\[ J_{n+1} ( \theta ) \geq E \left[ J'_n \left( T \right) + \xi ( T ) \given \F_{\theta} \right] \geq 0 \quad \mbox{a.s.} \]
By similar arguments and since $\zeta \in \S^+$,
we also have that $J'_{n+1}( \theta )$ is well defined and nonnegative. Moreover, by Proposition \ref{prop.snell}, 
$J_{n+1}$ and $J'_{n+1}$ are the Snell envelopes of $ J'_n + \xi $ and $ J_n -\zeta$ respectively. The proof is thus complete.
\end{proof}

\begin{lemma}\label{lem.JJn}
The sequences of families $\left( J_n \right)_{n \in \N}$ and $\left( J'_n \right)_{n \in \N}$ are non decreasing sequences of nonnegative supermartingale families.
\end{lemma}
\begin{proof}{}
The property can be proven by induction. By the previous lemma, we have
$J_1 \geq 0= J_0$ and $J'_1 \geq 0=J'_0$. Let us now suppose that, for a fixed $n \in \N^*(= \N \setminus \{0\})$, we have $J_n \geq J_{n - 1}$ and $J'_n \geq J'_{n - 1}$. We then have 
\[\mathcal{R} \left( J'_n + \xi \right) \geq \mathcal{R} \left( J'_{n - 1} + \xi \right) \quad {\rm and} \quad
\mathcal{R} \left( J_n - \zeta \right) \geq \mathcal{R} \left( J_{n - 1} - \zeta \right),\]
which leads to $J_{n+1} \geq J_n$ and $J'_{n+1}\geq J'_n.$
This concludes the proof.
\end{proof}
For each $\theta \in \T$\,, let us define $ J  ( \theta ) := \displaystyle\limsup_{n \to +\infty} J_n ( \theta )$ and 
$ \displaystyle J' ( \theta ) = \limsup_{n \to +\infty} J'_n ( \theta )$.
We clearly have:
\[
 J  ( \theta ) = \ulim_{n \to +\infty} J_n ( \theta ) \quad \mbox{a.s.} \quad {\rm and} \quad J' ( \theta ) = \ulim_{n \to +\infty} J'_n ( \theta ) \quad \mbox{a.s.}
\]
Note that the families $ J  = \left(  J  ( \theta ), \,\theta \in \T \right)$ and $ J' = \left(  J' ( \theta ),\,\theta \in \T \right)$ are clearly 
admissible and nonnegative (since, for each $n \in \N$\,, $J_n$ and $J'_n$ are themselves nonnegative).
\begin{theorem}\label{zpluszmoins} 
The families $ J $ and $ J'$ are nonnegative supermartingale families which satisfy
 $J = \mathcal{R} \left( J' + \xi \right)$ and $J' = \mathcal{R} \left( J - \zeta \right)$ that is, for each $\theta \in \T$\,,
\begin{align}
  J  ( \theta ) &= \esssup_{\tau \in \T_{\theta}} E \left[  J' \left( \tau \right) + \xi ( \tau ) \given \F_{\theta} \right] \quad \mbox{a.s.}, \label{zplus} \\
 J' ( \theta ) &= \esssup_{\sigma \in \T_{\theta}} E \left[  J  \left( \sigma \right) - \zeta ( \sigma ) \given \F_{\theta} \right] \quad \mbox{a.s.} \label{zmoins}
\end{align}
Moreover, $ J $ and $ J'$ are minimal in the following sense: if $\bar J$ and $\bar J'$ are two nonnegative supermartingale families satisfying the above system, that is, $\bar J = \mathcal{R} \left( \bar J' + \xi \right)$ and $\bar J' = \mathcal{R} \left( \bar J - \zeta \right)$,
then we have $ J  \leq \bar J$ and $ J' \leq \bar J'$.
\end{theorem}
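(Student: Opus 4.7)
The plan is to establish in turn: admissibility, nonnegativity, and the supermartingale property of $J$ and $J'$; the fixed-point equations $J = \mathcal{R}(J' + \xi)$ and $J' = \mathcal{R}(J - \zeta)$; and finally the minimality statement. The guiding principle is that, since $(J_n)$ and $(J_n')$ are nonnegative and nondecreasing in $n$ (by the two preceding lemmas), every passage to the limit reduces to monotone convergence, both ordinary and conditional.

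Nonnegativity and admissibility of $J, J'$ are immediate from those of $J_n, J_n'$ together with the fact that the $\limsup$ coincides almost surely with the increasing limit $\ulim_n J_n$; and since $J, J' \geq 0$, the integrability condition in the definition of a supermartingale family is trivial. For the supermartingale inequality, I would fix $\theta \geq \theta'$ in $\T$: Lemma \ref{positif} gives $\E[J_n(\theta) \given \F_{\theta'}] \leq J_n(\theta')$ a.s., so the monotone convergence theorem for conditional expectations yields $\E[J(\theta) \given \F_{\theta'}] = \ulim_n \E[J_n(\theta) \given \F_{\theta'}] \leq \ulim_n J_n(\theta') = J(\theta')$ a.s., and similarly for $J'$. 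For the fixed-point equation, fix $\theta \in \T$. On the one hand, for each $\tau \in \T_\theta$ conditional monotone convergence (using $\E[\xi(\tau)^- \given \F_\theta] < \infty$ from~\eqref{integ}) gives $\E[J'(\tau) + \xi(\tau) \given \F_\theta] = \ulim_n \E[J_n'(\tau) + \xi(\tau) \given \F_\theta] \leq \ulim_n J_{n+1}(\theta) = J(\theta)$ a.s., and taking the essential supremum over $\tau$ yields $\mathcal{R}(J' + \xi)(\theta) \leq J(\theta)$. On the other hand, since $J_n' \leq J'$ one has $J_{n+1}(\theta) \leq \esssup_\tau \E[J'(\tau) + \xi(\tau) \given \F_\theta] = \mathcal{R}(J' + \xi)(\theta)$, and letting $n \to \infty$ gives the reverse inequality. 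The identity for $J'$ follows symmetrically.

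For minimality, let $H, H'$ be nonnegative supermartingale families with $H \geq H' + \xi$ and $H' \geq H - \zeta$. I would prove by induction on $n$ that $J_n \leq H$ and $J_n' \leq H'$: the case $n = 0$ is clear, and assuming the statement for $n$, the inequality $J_n' \leq H'$ gives $J_n' + \xi \leq H' + \xi \leq H$, so $H$ is a supermartingale family dominating $J_n' + \xi$, which forces $J_{n+1} = \mathcal{R}(J_n' + \xi) \leq H$ by the minimality property of the Snell envelope recalled in the Appendix; symmetrically $J_{n+1}' \leq H'$. Letting $n \to \infty$ concludes. The only real point of vigilance throughout is keeping every expression well-defined in $[0, +\infty]$ when $J$ or $J'$ is infinite, which is ensured by the nonnegativity of the $J_n, J_n'$ and the integrability of $\xi^-$ and $\zeta^+$; no serious obstacle is expected.
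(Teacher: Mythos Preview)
Your proof is correct and follows essentially the same approach as the paper's: monotone convergence for the supermartingale property (this is the paper's Lemma~\ref{limitsuper}), passage to the limit in $J_{n+1}=\mathcal{R}(J_n'+\xi)$ for the fixed-point equations, and the same induction on $n$ for minimality (the paper's Lemma~\ref{encadre}). The only minor difference is in one direction of the fixed-point identity: to obtain $\mathcal{R}(J'+\xi)\leq J$ you argue directly that each $\E[J'(\tau)+\xi(\tau)\given\F_\theta]$ is dominated by $J(\theta)$ via conditional monotone convergence, whereas the paper first shows $J\geq J'+\xi$ and then invokes the Snell-envelope characterization of $\mathcal{R}(J'+\xi)$ as the smallest supermartingale dominating $J'+\xi$; both are equally short and valid.
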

Before giving the proof of the above theorem, we first show that the limit of a non decreasing sequence of nonnegative supermartingale families is a supermartingale family. More precisely, the following lemma holds.
\begin{lemma}\label{limitsuper}
Suppose that $(\phi_n)_{n \in \N}$ is a non decreasing sequence of nonnegative supermartingale families.
The family $\phi = \big(\phi(\theta),\theta\in \T\big)$ defined by $\displaystyle \phi(\theta)= \lim_{n \to \infty} \uparrow \phi_n(\theta)$ a.s.\,,  for each $\theta$ $\in$ $\T$\,, is then a nonnegative supermartingale family.
\end{lemma}
Note that $\displaystyle \phi (\theta)(\omega)$ is here only defined for $P$-almost all $\omega \in \Omega$. Classicaly, to define it for all $\omega$, it suffices to set $\displaystyle \phi(\theta)(\omega):= \limsup_{n \to \infty} \phi_n(\theta)(\omega)$.
\begin{proof}{}
One can easily prove that $\phi$ is an admissible family. Let us prove that it is a supermartingale family.
Let $\theta$, $\theta^{'}$ $\in$ $\T$ be such that $\theta \leq \theta^{'}$ a.s.\,
By the monotone convergence theorem for the conditional expectation, we get
$$E[ \phi (\theta^{'}) \,|\,\F_{\theta}]= \lim_{n \to \infty} \uparrow E[\phi_n (\theta^{'}) \,|\,\F_{\theta}] \leq \lim_{n \to \infty} \uparrow \phi_n (\theta) = \phi(\theta)\,\,\, {\rm a.s.},$$
where the inequality follows from the supermartingale family property of $\phi_n$, for each $n$.
\end{proof}
\begin{proof}[Proof of Theorem \ref{zpluszmoins}]
By Lemma \ref{lem.JJn} and  Lemma~\ref{limitsuper},  $J$ and $J'$ are nonnegative supermartingale families. Moreover as  the Snell enveloppe operator $\mathcal{R}$ is nondecreasing (see Proposition \ref{pro.snell}), for each $n \in \N$\,, we have $J_{n+1} = \mathcal{R} \left( J'_n + \xi \right) \leq \mathcal{R} \left(  J' + \xi \right)$.
By letting $n$ tend to $+\infty$, we get that
\begin{equation} \label{unu}
 J  \leq \mathcal{R} \left(  J' + \xi \right).
\end{equation}
Now, for each $n \in \N$\,, $J_{n+1} = \mathcal{R} (J'_n + \xi)$, hence by Proposition \ref{pro.snell}, $J_{n+1} \ge J'_n + \xi$.
By letting $n$ tend to $+\infty$, we derive that
$  J  \geq  J' + \xi$. 
By the supermartingale property of $ J $ and the characterization of $\mathcal{R} \left(  J' + \xi \right)$ as the smallest supermartingale greater than $ J' + \xi$, it follows that $J  \geq \mathcal{R} \left(  J' + \xi \right)$.
This with \eqref{unu} yields that $J  = \mathcal{R} \left(  J' + \xi \right)$. By similar arguments, one can easily derive that $J' = \mathcal{R} \left( J - \zeta \right)$.

It remains to show the second assertion. Let $\bar J$ and $\bar J'$ be two nonnegative supermartingale families satisfying the above system, that is, $\bar J = \mathcal{R} \left( \bar J' + \xi \right)$ and $\bar J' = \mathcal{R} \left( \bar J - \zeta \right)$.
Let us first show by induction that, for each $n \in \N$\,, the following property
\begin{equation}\label{JJprime}
J_n \leq \bar J \quad {\rm and} \quad J'_n \leq \bar J'
\end{equation}
holds. First, we have $J_0 =0 \leq \bar J$ and $J'_0 =0 \leq \bar J'$. Suppose now that, for some fixed $n \in \N$, Property (\ref{JJprime}) holds at rank $n$. Then, $J_n - \zeta \leq \bar J- \zeta$ and $J'_n + \xi  \leq \bar J' + \xi$. As $\mathcal{R}$ is a non decreasing operator (see Proposition \ref{prop.snell}), $ J'_{n+1}=\mathcal{R}(J_n - \zeta) \leq  \mathcal{R}(\bar J- \zeta)= \bar J'$ and $ J_{n+1}=\mathcal{R}(J'_n + \xi ) \leq  \mathcal{R}(\bar J' + \xi)= \bar J$. Thus, Property (\ref{JJprime}) holds at rank $n+1$.

By letting $n$ tend to $+\infty$ in (\ref{JJprime}), we get $ J  \leq \bar J$ and $ J' \leq \bar J'$, which ends the proof of Theorem~\ref{zpluszmoins}.
\end{proof}

Note now that since $ J  \geq  J' + \xi$ and $ J' \geq  J  - \zeta$, we have
\begin{proposition}\label{Y}
The condition $J(0) < +\infty $ is equivalent to the condition $  J'(0) < +\infty.$\\
Moreover, if $J(0) < +\infty$, the family of random variables $Y$ given by 
$$
Y \coloneqq  J  -  J',
$$
is then well defined and satisfies
\begin{equation}\label{encadrement}
 \xi  \leq Y \leq  \zeta. 
 \end{equation}
\end{proposition}
Note that this proposition ensures that, if $J(0) < +\infty $, then  $\xi   \leq  \zeta$. In other words, if there exists $\nu \in \T$ such that $P \left(  \xi (\nu) > \zeta (\nu) \right) >0$, then $J(0)= J'(0) = + \infty$.
 \begin{remark} Note that the existence of two nonnegative a.s. finite supermartingales $H$ and $H'$ such that $\xi \le H-H'\le \zeta$ is known as Mokobodski's condition. Thus Proposition \ref{Y} shows that condition $J(0)<+\infty$ implies that Mokobodski's condition holds. In section \ref{section.moko}, we prove that these conditions are actually equivalent.
\end{remark}

\section{Existence and characterization of the value of the game}
When $J(0) < +\infty $, the family $Y=J-J'$ appears as a quite natural candidate to be the value of the game.

\subsection{Case when there exists optimal stopping times for $J$ and $J'$}

We provide a first result  involving $Y=J-J'$ as the common value function.
\begin{proposition}\label{critereselle}
Suppose that $J(0) < +\infty$. Let $\theta \in \T$ and let $\left( \hat{\tau}, \hat{\sigma} \right) \in \T_{\theta}^2$ be such that $\hat{\tau}$ is optimal for $ J  ( \theta )$, that is
\begin{align*}
 J  ( \theta ) &= \esssup_{\tau \in \T_{\theta}} E \left[  J' \left( \tau \right) + \xi ( \tau ) \given \F_{\theta} \right] = E \left[  J' ( \hat{\tau}) + \xi ( \hat{\tau} ) \given \F_{\theta} \right] \quad \mbox{a.s.},
\intertext{and $\hat{\sigma}$ is optimal for $ J' ( \theta )$, that is }  
 J' ( \theta ) &= \esssup_{\sigma \in \T_{\theta}} E \left[  J  \left( \sigma \right) - \zeta ( \sigma ) \given \F_{\theta} \right] = E \left[  J  ( \hat{\sigma} ) - \zeta  ( \hat{\sigma} ) \given \F_{\theta} \right] \quad \mbox{a.s.}.
\end{align*}
Then, the game is fair, the common value function is equal to $Y(\theta)(= J(\theta)-J'(\theta))$ and $\left( \hat{\tau}, \hat{\sigma} \right)$ is a $\theta$-saddle point. 
We thus have 
\begin{equation} \label{sa}
Y ( \theta ) = \underbar{V} ( \theta ) = \bar{V} ( \theta ) = I_{\theta} \left( \hat{\tau}, \hat{\sigma} \right) \quad \mbox{a.s.}
\end{equation}
\end{proposition}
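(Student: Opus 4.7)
{(Plan)}
The plan is to exploit the Snell envelope characterization of the optimal stopping times $\hat\tau$ and $\hat\sigma$ (as recalled in the Appendix from Kobylanski and Quenez (2011)). Since $J = \mathcal{R}(J' + \xi)$ and $\hat\tau \in \T_\theta$ is optimal for $J(\theta)$, we know first that the family $J$ is a martingale family on $\T_{[\theta, \hat\tau]}$, and second that $J(\hat\tau) = J'(\hat\tau) + \xi(\hat\tau)$ a.s. Symmetrically, $J'$ is a martingale family on $\T_{[\theta, \hat\sigma]}$ and $J'(\hat\sigma) = J(\hat\sigma) - \zeta(\hat\sigma)$ a.s. These two facts, combined with the global inequalities $\xi \leq Y \leq \zeta$ from \eqref{encadrement}, are all we shall need.

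The first step will be to show $Y(\theta) = I_\theta(\hat\tau, \hat\sigma)$. Applying the martingale property of $J$ at the stopping time $\hat\tau \wedge \hat\sigma \in \T_{[\theta, \hat\tau]}$ gives $J(\theta) = \E[J(\hat\tau \wedge \hat\sigma) \given \F_\theta]$, and similarly $J'(\theta) = \E[J'(\hat\tau \wedge \hat\sigma) \given \F_\theta]$; subtracting yields $Y(\theta) = \E[Y(\hat\tau \wedge \hat\sigma) \given \F_\theta]$. On $\{\hat\tau \leq \hat\sigma\}$ the identity $J(\hat\tau) = J'(\hat\tau) + \xi(\hat\tau)$ rewrites as $Y(\hat\tau) = \xi(\hat\tau)$, while on $\{\hat\sigma < \hat\tau\}$ the dual identity gives $Y(\hat\sigma) = \zeta(\hat\sigma)$. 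Plugging these two pointwise formulas into the conditional expectation yields exactly $Y(\theta) = I_\theta(\hat\tau, \hat\sigma)$ a.s.

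The second step will establish the saddle-point inequalities. Fix $\tau \in \T_\theta$. Since $J'$ is a martingale on $\T_{[\theta, \hat\sigma]}$ and $\tau \wedge \hat\sigma$ lies in that set, $J'(\theta) = \E[J'(\tau \wedge \hat\sigma) \given \F_\theta]$; since $J$ is a supermartingale, $J(\theta) \geq \E[J(\tau \wedge \hat\sigma) \given \F_\theta]$. Subtracting gives $Y(\theta) \geq \E[Y(\tau \wedge \hat\sigma) \given \F_\theta]$. On $\{\tau \leq \hat\sigma\}$ we use $Y(\tau) \geq \xi(\tau)$ (from \eqref{encadrement}), and on $\{\hat\sigma < \tau\}$ we use the exact identity $Y(\hat\sigma) = \zeta(\hat\sigma)$, which together yield $Y(\theta) \geq I_\theta(\tau, \hat\sigma)$. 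A perfectly symmetric argument, using the martingale property of $J$ on $\T_{[\theta, \hat\tau]}$, the supermartingale property of $J'$, and the upper bound $Y(\sigma) \leq \zeta(\sigma)$, gives $Y(\theta) \leq I_\theta(\hat\tau, \sigma)$ for any $\sigma \in \T_\theta$.

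Combining the three identities/inequalities, we obtain $I_\theta(\tau, \hat\sigma) \leq I_\theta(\hat\tau, \hat\sigma) = Y(\theta) \leq I_\theta(\hat\tau, \sigma)$ for all $(\tau, \sigma) \in \T_\theta^2$, which is precisely the saddle-point condition \eqref{saddlepoint}. Taking $\esssup_\tau$ and $\essinf_\sigma$ then gives $\bar V(\theta) \leq Y(\theta) \leq \underbar V(\theta)$, and the trivial reverse inequality $\underbar V(\theta) \leq \bar V(\theta)$ forces all four quantities to coincide, yielding \eqref{sa}. The only nontrivial ingredient is the optimality$\Leftrightarrow$(martingale + reward attainment) characterization of the Snell envelope; everything else consists of conditioning on $\F_\theta$ and splitting according to which of $\hat\tau, \hat\sigma, \tau, \sigma$ is smaller, so I expect no significant obstacle beyond careful bookkeeping of the two cases $\{\cdot \leq \cdot\}$ and $\{\cdot < \cdot\}$.
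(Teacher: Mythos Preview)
Your proposal is correct and follows essentially the same approach as the paper's proof: both invoke the optimality criterion from the Appendix to obtain the martingale property of $J$ on $\T_{[\theta,\hat\tau]}$ and of $J'$ on $\T_{[\theta,\hat\sigma]}$ together with the identities $Y(\hat\tau)=\xi(\hat\tau)$ and $Y(\hat\sigma)=\zeta(\hat\sigma)$, then combine these with the supermartingale property of the other family and the bounds $\xi\leq Y\leq\zeta$ to derive the two saddle-point inequalities. The only cosmetic difference is that the paper phrases the second step as ``$Y$ is a submartingale on $[\theta,\hat\tau\wedge\sigma]$'' whereas you subtract the martingale and supermartingale estimates directly, which amounts to the same computation.
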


\begin{example}
Note that if $\xi$ is a supermartingale family, then for each $\theta$ $\in$ $\T$\,, $ (\theta, T)$ is a $\theta$-saddle point. Indeed, for each 
$(\tau, \sigma)$ $\in$ $\T_{\theta}^2$\,, 
\[I_{\theta} (\theta, T)= I_{\theta} (\theta, \sigma)= E[ \xi (\theta)  \given \F_{\theta} ] = \xi(\theta)\quad \mbox{a.s.}\]
and $I_{\theta} (\tau, T)=  E[ \xi (\tau)  \given \F_{\theta} ] \leq \xi (\theta)\quad a.s.$ and the common value function is equal to $\xi$
(without any condition on $\zeta$). Hence, if additionally, there exists $\nu \in \T$ such that $P \left(  \xi (\nu) > \zeta (\nu) \right) >0$, then $J(0)= + \infty$ (by (\ref{encadrement})) even if for each $\theta$\,, a $\theta$-saddle point does exist. 

This example shows that  condition $\xi \leq \zeta$ is not a necessary condition to have the existence of saddle points.  In particular Mokobodski's condition is not necessary.

\end{example}

\begin{proof}{}
Let $\theta \in \T$. By the optimality criterion (see  Proposition \ref{prop.opt}), $\left( J(\tau), \,  \tau \in \T_{\left[ \theta, \hat{\tau} \right]}\right)$ is a martingale family and
$ J ( \hat{\tau}) =  J' ( \hat{\tau}) + \xi ( \hat{\tau} )$ a.s.\,, that is $Y ( \hat{\tau} ) = \xi ( \hat{\tau} )$ a.s.\, Also, $\left( J'(\sigma), \,  \sigma \in \T_{\left[ \theta, \hat{\sigma} \right]}\right)$ is a martingale family and $J'  ( \hat{\sigma} ) =  J  ( \hat{\sigma} ) - \zeta  ( \hat{\sigma} )$ a.s.\,that is $Y ( \hat{\sigma} ) = \zeta  ( \hat{\sigma} )$ a.s.\,
Since $Y =  J  -  J'$, it follows that $\left( Y(\alpha), \,  \alpha \in \T_{\left[ \theta, \hat{\tau} \wedge \hat{\sigma} \right]}\right)$ is a martingale family  and hence that
\begin{align*}
Y ( \theta ) &= E \left[ Y \left( \hat{\tau} \wedge \hat{\sigma} \right) \given \F_{\theta} \right] = E \left[ Y ( \hat{\tau} ) {\bf 1}_{\left\{ \hat{\tau} \leq \hat{\sigma} \right\}} + Y ( \hat{\sigma} ) {\bf 1}_{\left\{ \hat{\sigma} < \hat{\tau} \right\}} \given \F_{\theta} \right] \quad \mbox{a.s.}\\
&= E \left[ \xi ( \hat{\tau} ) {\bf 1}_{\left\{ \hat{\tau} \leq \hat{\sigma} \right\}} + \zeta  ( \hat{\sigma} ) {\bf 1}_{\left\{ \hat{\sigma} < \hat{\tau} \right\}} \given \F_{\theta} \right] = I_{\theta} \left( \hat{\tau}, \hat{\sigma} \right) \quad \mbox{a.s.}.
\end{align*}
Let us now show that, for each $\sigma \in \T_{\theta}$\,, $Y ( \theta ) \leq I_{\theta} \left( \hat{\tau}, \sigma \right)$ a.s.. Let $\sigma \in \T_{\theta}$. Since $\left( J(\tau), \,  \tau \in \T_{\left[ \theta, \hat{\tau} \right]}\right)$ is a martingale family and $ J'$ is a supermartingale family, it follows that $Y =  J  -  J'$ is a submartingale family on $\left[ \theta, \hat{\tau} \wedge \sigma \right]$, which ensures that:\[ Y ( \theta ) \leq E \left[ Y \left( \hat{\tau} \wedge \sigma \right) \given \F_{\theta} \right] \leq E \left[ \xi ( \hat{\tau} ) {\bf 1}_{\left\{ \hat{\tau} \leq \sigma \right\}} + \zeta ( \sigma ) {\bf 1}_{\left\{ \sigma < \hat{\tau} \right\}} \given \F_{\theta} \right] = I_{\theta} \left( \hat{\tau}, \sigma \right)\quad \mbox{a.s.}, \]
where the second inequality follows from the fact that $Y ( \hat{\tau} ) = \xi ( \hat{\tau} )$ a.s. and $Y \left( \sigma \right) \leq \zeta ( \sigma )$ a.s.\,By similar arguments, one can show that, for each $\tau \in \T_{\theta}$\,, $I_{\theta} \left(  \tau, \hat{\sigma} \right) \leq Y ( \theta ) $ a.s.\, We have thus proved that $\left( \hat{\tau}, \hat{\sigma} \right)$ is a $\theta$-saddle point for the game and that equalities (\ref{sa}) hold.
\end{proof}

\begin{remark}
Let $\bar J$ and $\bar J'$ be two nonnegative supermartingale families in $\S^-$, and such that $\bar J = \mathcal{R} \left( \bar J' + \xi \right)$ and $\bar J' = \mathcal{R} \left( \bar J - \zeta \right)$. The same proof shows that the above property still holds for $\bar J$ and $\bar J'$.\\
\indent
More precisely, if $\bar J(0) < +\infty$ and if, for some $\theta \in \T$ and $\left( \hat{\tau}, \hat{\sigma} \right) \in \T_{\theta}^2$, $\hat{\tau}$ is optimal for $\bar J ( \theta )$ and $\hat{\sigma}$ is optimal for $\bar J' ( \theta )$, then $\left( \hat{\tau}, \hat{\sigma} \right)$ is a $\theta$-saddle point and we have:
\begin{equation*}
\bar J ( \theta ) - \bar J' ( \theta ) = Y ( \theta ) = \underbar{V} ( \theta ) = \bar{V} ( \theta ) \quad \mbox{a.s.}
\end{equation*}
In particular, we have $\bar J ( \theta ) - \bar J' ( \theta ) = J ( \theta ) -  J' ( \theta )$ a.s.\,
\end{remark}
We will see in the next section that it is not necessary to suppose the existence of saddle points so that the game is fair.

\subsection{Existence and characterization of the value of the game under right regularity assumptions on $\xi$ and $\zeta$}
Let us now introduce the following definition. 
\begin{definition}
A family $\phi=\left( \phi ( \theta ), \theta \in \T  \right)$ $\in$ $\mathcal{S}^-$ is said to be
 \emph{right-(resp. left-)upper semicontinuous in expectation along stopping times (right- (resp. left-) USCE)} if for 
 all $\theta \in \T$ and for all sequences of stopping times $ (\theta_n)_{n \in \N}$ such that $\theta_n\downarrow \theta$ 
(resp. $ \theta_n \uparrow \theta$) 
\begin{equation*}
E[\phi(\theta) ]\geq \limsup_{n\to \infty} E[\phi(\theta_n)].
\end{equation*}  
Moreover, $\phi$ is said to be USCE if it is both right and left-USCE.
\end{definition}

\begin{remark} \label{rusce} 
A non negative supermartingale family is right-USCE.
\end{remark}

\begin{theorem}\label{saun}
Suppose that $ J (0)< + \infty$ and that the families $(\xi(\theta),\theta \in\T)$ and $(-\zeta(\theta), \theta \in \T)$ are right-USCE.
Then, the game is fair and the common value function is equal to $Y(=J-J')$ that is, for each $\theta \in \T$\,,
\begin{equation}\label{equ}
Y(\theta) = \underline V(\theta) = \overline V(\theta)\,\,\, {\rm a.s.}\,
\end{equation}
\end{theorem}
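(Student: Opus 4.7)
The plan is to squeeze: I will prove $\bar V(\theta) \leq Y(\theta) \leq \underbar V(\theta)$ a.s.; combined with the universal inequality $\underbar V \leq \bar V$, these force the three quantities to coincide. The argument mirrors that of Proposition~\ref{critereselle}, but since no saddle point is yet available, I work with $\e$-optimal stopping times extracted from the optimal stopping theory recalled in the Appendix.

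First I fix $\theta \in \T$, $\e > 0$, and produce two $\e$-optimal stopping times. Being nonnegative supermartingale families, $J$ and $J'$ are themselves right-USCE (a supermartingale $\phi$ always satisfies $\limsup_n \E[\phi(\theta_n)] \leq \E[\phi(\theta)]$ when $\theta_n \downarrow \theta$); combined with the right-USCE hypotheses on $\xi$ and $-\zeta$, the families $J'+\xi$ and $J-\zeta$ are right-USCE. The Snell envelope $\e$-optimality result recalled in the Appendix then yields $\hat\tau^\e \in \T_\theta$ with
\[ J(\hat\tau^\e) \leq J'(\hat\tau^\e) + \xi(\hat\tau^\e) + \e \quad \as, \]
and such that $\bigl(J(\rho), \rho \in \T_{[\theta, \hat\tau^\e]}\bigr)$ is an $\e$-martingale family, i.e. $\E[J(\rho)|\F_\theta] \geq J(\theta) - \e$ for every such $\rho$. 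Symmetrically, I obtain $\hat\sigma^\e \in \T_\theta$ with $J'(\hat\sigma^\e) \leq J(\hat\sigma^\e) - \zeta(\hat\sigma^\e) + \e$ a.s.\ and $J'$ an $\e$-martingale on $\T_{[\theta, \hat\sigma^\e]}$. Equivalently, $Y(\hat\tau^\e) \leq \xi(\hat\tau^\e) + \e$ and $\zeta(\hat\sigma^\e) \leq Y(\hat\sigma^\e) + \e$ a.s.

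To control $\bar V(\theta)$, I pick any $\tau \in \T_\theta$ and combine $\xi \leq Y$ (from~\eqref{encadrement}) on $\{\tau \leq \hat\sigma^\e\}$ with $\zeta(\hat\sigma^\e) \leq Y(\hat\sigma^\e) + \e$ on $\{\hat\sigma^\e < \tau\}$, together with the admissibility identity $Y(\tau)\one_{\{\tau \leq \hat\sigma^\e\}} + Y(\hat\sigma^\e)\one_{\{\hat\sigma^\e < \tau\}} = Y(\tau \wedge \hat\sigma^\e)$, to get $I_\theta(\tau, \hat\sigma^\e) \leq \E[Y(\tau \wedge \hat\sigma^\e)|\F_\theta] + \e$ a.s. Splitting $Y = J - J'$, the supermartingale property of $J$ and the $\e$-martingality of $J'$ on $[\theta, \hat\sigma^\e]$ yield
\[ \E[Y(\tau \wedge \hat\sigma^\e)|\F_\theta] \leq J(\theta) - \bigl(J'(\theta) - \e\bigr) = Y(\theta) + \e. \]
Hence $I_\theta(\tau, \hat\sigma^\e) \leq Y(\theta) + 2\e$, and taking successive essential supremum over $\tau$ and essential infimum over $\sigma$ followed by $\e \downarrow 0$ gives $\bar V(\theta) \leq Y(\theta)$ a.s. The lower bound $Y(\theta) \leq \underbar V(\theta)$ follows by the symmetric computation with $\hat\tau^\e$, using $\xi(\hat\tau^\e) \geq Y(\hat\tau^\e) - \e$, $Y \leq \zeta$, the $\e$-martingality of $J$ on $\T_{[\theta, \hat\tau^\e]}$ and the supermartingale property of $J'$.

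The main obstacle will be Step~1: simultaneously securing the \emph{pointwise} inequality $J(\hat\tau^\e) \leq J'(\hat\tau^\e) + \xi(\hat\tau^\e) + \e$ and the $\e$-martingality of $J$ on $[\theta, \hat\tau^\e]$. This is precisely the $\e$-optimality property that the right-USCE assumption is designed to provide in the Kobylanski--Quenez framework (the canonical choice being the first entry into the $\e$-optimal level set $\{J \leq J' + \xi + \e\}$, analogously for $\hat\sigma^\e$). Once this input is imported from the Appendix, the remainder reduces to the short supermartingale/$\e$-martingale calculation sketched above.
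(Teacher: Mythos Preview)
Your approach is essentially the paper's: squeeze $Y(\theta)$ between $\underbar V(\theta)$ and $\bar V(\theta)$ using approximately optimal stopping times for the two Snell-envelope problems $J=\mathcal R(J'+\xi)$ and $J'=\mathcal R(J-\zeta)$, exploiting that on the relevant stochastic interval one of $J,J'$ is (approximately) a martingale while the other remains a supermartingale.

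The only difference is the parametrisation of ``approximately optimal''. The paper uses the multiplicative $\lambda$-stopping times $\tau^{\lambda}(\theta)=\essinf\{\tau\in\T_\theta:\lambda J(\tau)\le J'(\tau)+\xi(\tau)\}$ and $\sigma^{\lambda}(\theta)$, which is precisely what the Appendix supplies: \eqref{stepdeux} gives an \emph{exact} martingale property of $J$ on $[\theta,\tau^{\lambda}]$ (not an $\e$-martingale), and \eqref{stepun} gives the pointwise inequality $Y(\tau^{\lambda})\le\xi(\tau^{\lambda})+(1-\lambda)J(\tau^{\lambda})$; the resulting error terms are $(1-\lambda)J(\theta)$ and $(1-\lambda)J'(\theta)$, killed by letting $\lambda\uparrow 1$. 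Your additive formulation (pointwise $+\e$ and ``$\e$-martingale'') is not the version recorded in the Appendix, so your appeal to it is slightly miscited; if you switch to the $\lambda$-stopping times the computation you wrote goes through verbatim and coincides with Lemma~\ref{sadeuxbis}. The multiplicative scaling is what makes both the pointwise inequality \eqref{stepun} and the exact martingale property \eqref{stepdeux} available simultaneously under mere right-USCE, which is the delicate point you flagged as ``the main obstacle''.
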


\begin{proof}{} Let $\theta \in \T$. For each $\lambda \in ]0,1[$\,, we introduce 
\begin{equation*} \tau^{\lambda} (\theta):=
\essinf \{\,\tau \in \T_{\theta}\, , \,\lambda  J (\tau) \leq  J'(\tau)+ \xi(\tau) \,\,\mbox {a.s.} \,\}.
\end{equation*}
and
\begin{equation*}\sigma^{\lambda} (\theta):=
\essinf \{\,\sigma \in \T_{\theta}\, , \,\lambda  J'(\sigma) \leq  J (\sigma)- \zeta(\sigma) \,\,\mbox {a.s.} \,\}.
\end{equation*}

By Theorem \ref{thm.epsopt}, $\tau^{\lambda} (\theta)$ (resp. $\sigma^{\lambda} (\theta)$) is $(1 - \lambda)$-optimal for $ J (\theta)$ (resp. $ J'(\theta)$).
In order to simplify notation, in the sequel, $\tau^{\lambda} (\theta)$ (resp. $\sigma^{\lambda} (\theta)$) will be denoted by $\tau^{\lambda}$ (resp. $\sigma^{\lambda}$). Now, the following lemma holds.

\begin{lemma}\label{sadeuxbis}
For each $\lambda$ $\in$ $]0,1[$ and each $(\sigma, \tau)$ $\in$ $\T_{\theta}^2$, we have 
\begin{equation*}
I_{\theta}(\tau,  \sigma^{\lambda}) - (1 - \lambda) J'(\theta)\,\,\leq \,\, Y(\theta) \,\,\leq \,\,I_{\theta}( \tau^{\lambda}, \sigma) + (1 - \lambda)  J (\theta) \,\,\mbox {a.s.} 
\end{equation*}
\end{lemma}
We postpone for a while the proof of this lemma and complete the proof of  Theorem \ref{saun}.
\\
We clearly have that $ \underline V(\theta)\leq \overline V(\theta)$ a.s.\,Hence, it is sufficient to prove that 
\begin{equation}\label{satrois}
 \overline V(\theta) \,\,\leq \,\, Y(\theta) \,\,\leq \,\,\underline V(\theta) \,\,\, {\rm a.s.}\,
 \end{equation} 
Now, the previous lemma yields that  for each $\lambda$ $\in$ $]0,1[$\,,
\begin{equation*}
\esssup_{\tau \in \T_{\theta}} I_{\theta}(\tau,  \sigma^{\lambda}) - (1 - \lambda) J'(\theta)\,\,\leq \,\, Y(\theta) \,\,\leq 
\,\,\essinf_{\sigma \in \T_{\theta}}I_{\theta}( \tau^{\lambda}, \sigma) + (1 - \lambda)  J (\theta) \,\,\mbox {a.s.}\,,
\end{equation*}
which implies that
\begin{equation*}
 \overline V(\theta) - (1 - \lambda) J'(\theta)\,\,\leq \,\, Y(\theta) \,\,\leq \,\, \underline V(\theta) + (1 - \lambda)  J (\theta)\,\,\, {\rm a.s.}\,
 \end{equation*}
By letting $\lambda$ tend to $1$, we get inequalities (\ref{satrois}).
It follows that $ \overline V(\theta) = Y(\theta) =\underline V(\theta)$ a.s.\,, and completes the proof of Theorem \ref{saun}.\end{proof}

It remains to prove Lemma \ref{sadeuxbis} which actually shows that $(\sigma^{\lambda}, \tau^{\lambda})$ is an $(1- \lambda)$-saddle point.
\begin{proof}[Proof of Lemma \ref{sadeuxbis}] First, one can easily show that each supermatingale family 
is right-USCE. Hence, since $ J $ and $ J'$ are supermartingale families, they are right-USCE and so are $ J  -\zeta$ and $ J' + \xi$ because $\xi$ and $-\zeta$ are right-USCE.\\
By Remark \ref{rusce}, since $ J $ and $ J'$ are supermartingale families, they are right-USCE and so are $ J  -\zeta$ and $ J' + \xi$ because $\xi$ and $-\zeta$ are right-USCE.\\
Now, by Theorem \ref{thm.epsopt},
$\big(\,  J'(\sigma), \sigma \in \T_{[\theta,  \sigma^{\lambda}]} \,\big)$ is a martingale family.
 Hence, since $ J $ is a supermartingale family, it follows that $\big(\,  Y(\sigma), \sigma \in \T_{[\theta,  \sigma^{\lambda}]} \,\big)$ 
 is a supermartingale family, because $Y=  J -  J'$. We thus have
\begin{equation}\label{firsta}
Y(\theta)\geq E[ Y( \sigma^{\lambda}\wedge \tau)\,|\,\F_{\theta}]= E[Y( \tau) {{\bf 1}}_{\{ \tau\leq   \sigma^{\lambda}\} }+
 Y( \sigma^{\lambda}) {{\bf 1}}_{  \{\sigma^{\lambda} < \tau \}} \,|\,\F_{\theta}]\,\,\, {\rm a.s.}\,
\end{equation}
Recall now that $Y\geq \xi$. Moreover, thanks to inequality (\ref{lambdai}) in the Appendix, we have the following inequality
$\lambda  J'( \sigma^{\lambda})\leq  J ( \sigma^{\lambda})- \zeta( \sigma^{\lambda})$ a.s.\,, which can be written
\[Y( \sigma^{\lambda}) \geq \zeta( \sigma^{\lambda})- (1- \lambda)  J'( \sigma^{\lambda})\,\,\, {\rm a.s.}\]
This with inequality (\ref{firsta}) leads to 
\[Y(\theta) \geq E[\xi( \tau) {{\bf 1}}_{\{ \tau \leq   \sigma^{\lambda} \}}+\zeta( \sigma^{\lambda}) {{\bf 1}}_{ \{ \sigma^{\lambda} < \tau \}}\,|\,\F_{\theta}] - (
1- \lambda) E[  J'( \sigma^{\lambda}) {{\bf 1}}_{ \{ \sigma^{\lambda} < \tau\}}\,|\,\F_{\theta}]   \,\,\, {\rm a.s.}\,\]
The supermatingale property of $ J'$ yields that
\[E[  J'( \sigma^{\lambda}) {{\bf 1}}_{  \{\sigma^{\lambda} < \tau\}}\,|\,\F_{\theta}] \leq 
E[  J'( \sigma^{\lambda}) \,|\,\F_{\theta}]  \leq   J'(\theta)
  \,\,\, {\rm a.s.}\,\]
This with the previous inequality and the definition of $I_{\theta}(\tau,  \sigma^{\lambda})$ leads to
\[Y(\theta) \geq I_{\theta}(\tau,  \sigma^{\lambda}) - (1- \lambda)  J'(\theta)  \,\,\, {\rm a.s.}\]
By the same arguments, one can show the following inequality:
\[Y(\theta) \,\,\leq \,\,I_{\theta}( \tau^{\lambda}, \sigma) + (1 - \lambda)  J (\theta) \,\,\mbox {a.s.}\,\]
The proof of Lemma \ref{sadeuxbis} is thus complete.
\end{proof}

\begin{remark}
Let $\bar J$, $\bar J'$ be two nonnegative supermartingale families such that $\bar J = \mathcal{R} \left( \bar J' + \xi \right)$ and $\bar J' = \mathcal{R} \left( \bar J - \zeta \right)$. The same proof shows that the above property still holds for $\bar J$ and $\bar J'$.\\
More precisely, if $\bar J(0) < +\infty$ and if $\xi$ and $-\zeta$ are right-USCE, then 
$Y(\theta) = \bar J(\theta) - \bar J' (\theta)$ a.s. and equalities (\ref{equ}) hold.
\end{remark}

\section{Existence of saddle points}
By Theorem \ref{zpluszmoins}, $J$  and $J'$  are the value functions associated with the optimal stopping problems with rewards $J'+\xi$  and  $J-\zeta$ respectively. 
 By Proposition \ref{critereselle}, if these two optimal stopping problems admit optimal stopping times,
 respectively $\hat \tau$ and $\hat \sigma$, the pair $(\hat \tau , \hat \sigma)$ is then a saddle point for the game problem.
Now, Theorem \ref{thm.exopt} ensures that, if a reward is USCE, then there exists an optimal stopping time for the associated optimal stopping problem. Thus, the natural question arises: under which conditions on $\xi$ and $\zeta$, the families $J' + \xi$ and $J- \zeta$ are USCE?

By Remark \ref{rusce}, since $ J $ and $ J'$ are supermartingale families, they
are right-USCE. Hence, if $\xi$ and $-\zeta$ are right-USCE, so are $ J  -\zeta$ and $ J' + \xi$.

We now introduce the following definition.
\begin{definition} A uniformly integrable family $( \phi(\theta), \theta \in \T)$ in $\S^-$ is said to be {\em strongly left-upper semicontinuous along stopping times 
in expectation} (strong left-USCE) if for all $\theta \in \T$, for all $F$ $\in$ ${\cal F}_{\theta^-}$, and for all non decreasing sequences of stopping times $ (\theta_n)_{n \in \N}$ such that $ \theta_n \uparrow \theta$, 
\begin{equation}\label{uscef}
 \limsup_{n\to \infty} E[\phi(\theta_n){{\bf 1}}_{F}] \leq E[\phi(\theta)  {{\bf 1}}_{F} ].
 \end{equation} 
 \end{definition}
 
 \begin{remark}Note that in this definition, no condition is required at a totally inaccessible stopping time (such as, for example, a jump of a Poisson process). Indeed, suppose that $\theta$ is a totally inaccessible stopping time. Then, if $ (\theta_n)_{n \in \N}$ is a non decreasing sequence of stopping times converging to $\theta$, it is necessarily a.s. constant equal to $\theta$ from a certain rank. Using the integrability conditions and Fatou's lemma, we get 
\[ \limsup_{n\to \infty} E[\phi(\theta_n){{\bf 1}}_{F}] \leq  E[\limsup_{n\to \infty} \phi(\theta_n){{\bf 1}}_{F}] = E[\phi(\theta)  {{\bf 1}}_{F} ].\]
Hence, inequality (\ref{uscef}) is always satisfied for any $F$ $\in$ ${\cal F}_{\theta^-}$.\\
\indent In the particular case of an optional process $(\phi_t)$, the strong left-USCE property of the family $\big( \phi_{\theta},   \theta \in \T \big)$ is thus
weaker than the usual left-upper semicontinuity property of the process $(\phi_t)$.
 \end{remark}

We  provide the following regularity result.
\begin{theorem}\label{left-USCE} 
Suppose the families 
$J$ and $J'$ are uniformly integrable. Suppose also that, for each predictable stopping time $\tau \in \T$, on  $\{\tau <T\}$, 
\begin{equation}
\label{SUun}
\{ \xi(\tau) = \zeta(\tau) \} = \emptyset \quad {\rm a.s.}
\end{equation}
and that $\xi$ and $\zeta$ are left-limited along stopping times at $T$ with $\{\xi(T^-) = \zeta(T^-)\} = \emptyset$ a.s.

If $\xi$ and $-\zeta$ are right-USCE and strong left-USCE, then  the families $J$ and $J'$ are USCE (that is right- and left-USCE).
\end{theorem}
This theorem together with Proposition \ref{critereselle} and the  existence of  optimal stopping time for USCE  
(see Theorem \ref{thm.exopt}) provides the following general existence result.
\begin{corollary}\label{existence}{(\bf \em Saddle-point existence result)}
Suppose that the assumptions of the above theorem hold.
For each $\theta \in \T$\,, the stopping time
\begin{equation}\label{tauetoile}
 \tau_{*} (\theta):=
\essinf \{\,\tau \in \T_{\theta}\, , \,  J (\tau) =  J'(\tau)+ \xi(\tau) \,\,\mbox {a.s.} \,\}.
\end{equation}
is an optimal stopping time for $ J  ( \theta )$ 
and 
\begin{equation} \label{sigmaetoile}
\sigma_* (\theta):=
\essinf \{\,\sigma \in \T_{\theta}\, , \, J'(\sigma) =  J (\sigma)- \zeta(\sigma) \,\,\mbox {a.s.} \,\}.
\end{equation}
is an optimal stopping time for  $ J' ( \theta )$. 
Moreover, the pair $\left(  \tau_{*} (\theta),\sigma_* (\theta) \right)$ is a $\theta$-saddle point for the criterion $I_{\theta}$ and
\[Y ( \theta ) = \underbar{V} ( \theta ) = \bar{V} ( \theta ) = I_{\theta} \left( \tau_{*} (\theta), \sigma_* (\theta)\right) \quad \mbox{a.s.}.\]
\end{corollary}

\begin{remark}
Let us consider the particular case where the families $\xi=\left( \xi ( \tau ), \tau \in \T  \right)$ and\\ $\zeta=\left( \zeta ( \tau ), \tau \in \T  \right)$ are defined via given predictable processes 
$(\xi'_t)$ and $(\zeta'_t)$ by $\xi(\tau):= \xi'_\tau$ and $\zeta(\tau):= \zeta'_\tau$. Then, by classical results on processes (see Dellacherie and Meyer (1977)), condition (\ref{SUun}) is equivalent to the fact that $P(\xi'_t < \zeta'_t,\,\, 0< t<T)=1$. Of course, this equivalence does not hold if $(\xi'_t)$ and $(\zeta'_t)$ are only supposed to be optional.\\
 Moreover, Theorem \ref{left-USCE} still holds under slightly different assumptions (see Proposition 
\ref{ajoute}).

Note also that the assumptions of the above existence result are weaker than those made by Alario-Nazaret, Lepeltier and Marchal in \cite{A} (see section 2 p.30). Moreover, their proof of the left-upper semicontinuity property of the processes $(J_t)$ and $(J'_t)$, where 
$(J_t)$ and $(J'_t)$ are the processes which aggregate the families $J$ and $J'$, requires highly sophisticated  results of the General Theory of Processes as the so called Mertens decomposition of supermartingales and the existence of a left-upper semicontinuous envelope process $(\underbar X_t)$
 for a given optional process $(X_t)$ (see Lemma 4-2 p.30 in \cite{A}). On the contrary, the proof given below is only based on classical properties of Probability Theory.
\end{remark}

Before showing Theorem \ref{left-USCE}, we provide the following lemma.

 \begin{lemma}\label{DM} Suppose that for each predictable stopping time $\tau\in \T$, on  $\{\tau <T\} $,
\[\{ \xi(\tau) = \zeta(\tau) \} = \emptyset \quad {\rm a.s.}\]
Then, for each $\theta \in \T$  and for each non decreasing sequence of stopping times $(\theta_n)_{n \in \N}$ such that $ \theta^n \uparrow \theta$, we have
\begin{equation*} 
\Big\{ \xi(\theta) = \zeta(\theta) \Big\} \cap \Big\{\theta_n<\theta<T,\,\, \mbox{for all}\,\,n\, \Big\}= \emptyset \quad {\rm a.s.}
\end{equation*}
  \end{lemma}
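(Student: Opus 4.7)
The plan is to argue by contradiction: suppose $B:=\{\xi(\theta)=\zeta(\theta)\}\cap A$ has strictly positive probability, where $A:=\{\theta_n<\theta\text{ for all }n\}$, and then construct a predictable stopping time $\tau\in\T$ with $\tau>0$ a.s.\ and $P(\xi(\tau)=\zeta(\tau))>0$, contradicting hypothesis~\eqref{SUun}.

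The first step is to verify that $A\in\F_{\theta^-}$. This follows from writing each $\{\theta_n<\theta\}=\bigcup_{q\in\Q_+}\big(\{\theta_n\leq q\}\cap\{q<\theta\}\big)$, each term of which lies in $\F_{\theta^-}$ by the very definition of that $\sigma$-algebra, and taking the countable intersection over $n$. Moreover, since the $\theta_n$ are nonnegative and $\theta_n<\theta$ on $A$, one has $\theta>0$ on $A$.

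The heart of the proof is the construction of a predictable stopping time coinciding with $\theta$ on $A$. The natural candidate is
\[ \tau:=\theta\,\one_A+T\,\one_{A^c}, \]
which is a stopping time because $A\in\F_\theta$ (it is even in $\F_{\theta^-}$). For predictability, I would build an announcing sequence by setting $A_n:=\{\theta_n<\theta\}=\{\theta\leq\theta_n\}^c\in\F_{\theta_n}$, which satisfies $A_n\downarrow A$, and defining
\[ \tau_n:=\theta_n\,\one_{A_n}+\psi_n\,\one_{A_n^c}, \]
for suitable increasing deterministic cutoffs $\psi_n\uparrow T$ (for instance $\psi_n:=T-2^{-n}$), possibly after passing to a subsequence in order to preserve monotonicity at the random ``jump index'' $N(\omega):=\min\{n:\theta_n(\omega)=\theta(\omega)\}$ on $A^c$. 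One then checks that $(\tau_n)$ is a non-decreasing sequence of stopping times, that $\tau_n<\tau$ on $\{\tau>0\}$, and that $\tau_n\uparrow\tau$; this shows $\tau$ is predictable. Also $\tau>0$ a.s., since $\tau=\theta>0$ on $A$ and $\tau=T>0$ on $A^c$.

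Once $\tau$ is a predictable stopping time in $\T$ with $\tau>0$ a.s., the admissibility of $\xi$ and $\zeta$ yields $\xi(\tau)=\xi(\theta)$ and $\zeta(\tau)=\zeta(\theta)$ on $A$, hence $B\subseteq\{\xi(\tau)=\zeta(\tau)\}$, so $P(\xi(\tau)=\zeta(\tau))\geq P(B)>0$, contradicting~\eqref{SUun}. The main obstacle is carrying out the construction of the announcing sequence $(\tau_n)$ cleanly: the non-trivial point is monotonicity across the random index $N(\omega)$ where $\theta_n$ stabilises on $A^c$, which forces the choice of the deterministic cutoffs $\psi_n$ and possibly a subsequence extraction. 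A conceptually shorter alternative would be to invoke the Meyer decomposition of the graph $[\theta]$ into accessible and totally inaccessible parts: since $(\theta_n)$ announces $\theta$ on $A$, the trace $[\theta]\cap(A\times[0,T])$ lies in the accessible part and is therefore covered by countably many graphs of predictable stopping times $(S_k)$; $P(B)>0$ then forces $P(\xi(S_k)=\zeta(S_k))>0$ for some $k$, directly violating~\eqref{SUun} on $S_k$.
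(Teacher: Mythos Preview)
Your approach is essentially the same as the paper's: construct a predictable stopping time $\tau$ coinciding with $\theta$ on $A$ (the paper announces it by $\tau_n = (\theta_n \one_{\{\theta_n < \tau\}} + T\one_{\{\theta_n \geq \tau\}}) \wedge (T - \tfrac{1}{n})$, which yields exactly your $\tau = \theta\one_A + T\one_{A^c}$), then use admissibility of $\xi,\zeta$ together with hypothesis~\eqref{SUun} to conclude. The paper argues directly rather than by contradiction, and its announcing sequence is given by a single closed formula rather than your $A_n$-based construction, but these differences are cosmetic.
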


\begin{proof}[Proof of Lemma \ref{DM}] Let us introduce the set $A$ $:=$ $\Big\{\theta_n<\theta<T,\,\, \mbox{for all}\,\,n\, \Big\}$.\\ 
Let us first show that $\theta$ coincides on $A$ with a predictable stopping time. Let 
\[\tau_n := (\theta_n {\bf 1}_{ \{\theta_n < \theta \}}+ T{\bf 1}_{ \{\theta_n \geq \theta \}}) \wedge (T-\frac{1}{n}),\]
for each $n$.
The sequence $(\tau_n)_{n \in \N}$ announces its limit $\tau$ everywhere on $\Omega$. Hence, $\tau$ is predictable.
Also, $\tau = \theta$ a.s. on $A$. 
Hence, we get $$\{ \xi(\theta) = \zeta(\theta) \} \cap A \,= \,\{ \xi(\tau) = \zeta(\tau) \} \cap A \,\subset \, \{ \xi(\tau) = \zeta(\tau) \}\,=\, \emptyset \quad {\rm a.s.}\,,$$
which provides the desired result. 
\end{proof}

\begin{proof}[Proof of Theorem \ref{left-USCE}] Since $J$ and $J'$ are supermartingale families, one can easily prove that they are right-USCE. It thus remains to prove the left-USCE property.
Let $\theta$ be a given stopping time and let $(\theta_n)_{n \in \N}$ be a non decreasing sequence of stopping times such that 
$ \theta^n \uparrow \theta$. Let us show that $\limsup_{n \to + \infty} E[J(\theta_n)] \leq E[J(\theta)]$.

Let us define $A$ $:=$ $\{\theta_n<\theta,\,\, \mbox{for all}\,\,n\, \}$.
Since for almost every $\omega$ $\in$ $ A^c$, the sequence $(\theta_n(\omega))_{n \in \N}$ is constant from a certain rank, it follows that the sequence $(J(\theta_n)(\omega))_{n \in \N}$ is also constant from a certain rank. 
Indeed,  note first that $A^c := \cup_p \cap_{l \geq p} \{ \theta_l = \theta\}$. Let $p \in \N$. By the admissibility property of $J$, for each $n \geq p$\,, 
$J(\theta_n) = J(\theta)$ a.s.\,on $\cap_{l \geq p} \{ \theta_l = \theta\}$. Hence,
$\lim_{n\to\infty} J(\theta_n)= J(\theta)$ a.s.\,on $\cap_{l \geq p} \{ \theta_l = \theta\}$ and this holds for each $p$. Hence, $\lim_{n\to\infty} J(\theta_n)= J(\theta)$ a.s.\,on $ A^c$.

Since $(J(\theta_n))$ is uniformly integrable, we have $\lim_{n \to + \infty} E[J(\theta_n){{\bf 1}}_{A^c}] = E[J(\theta){{\bf 1}}_{A^c}]$. It is thus sufficient to show that 
\[\limsup_{n \to + \infty} E[J(\theta_n){{\bf 1}}_A] \leq E[J(\theta){{\bf 1}}_A].\]
Since $(\theta_n)$ announces $\theta$ on $A$, by the convergence theorem for nonnegative discrete supermartingales, 
the sequence $\left( J(\theta_n) \right)_{n \in \N}$ converges a.s. to a nonnegative random variable we denote by $J(\theta^-)$. 
Also, the random variable $J(\theta^-){\bf 1} _A$ is ${\cal F}_{\theta^-}$-measurable and, if $(\theta'_n)_{n \in \N}$ is a non decreasing sequence of stopping times such that $ \theta'_n \uparrow \theta$, then
 $\lim_{n\to\infty} J(\theta'_n)= \lim_{n\to\infty} J(\theta_n) = J(\theta^-)$ a.s.\,on $A \cap A'$, 
 where $A':=\{\theta'_n<\theta,\,\, \mbox{for all}\,\,n\, \}$, as precised in Lemma \ref{sull} in the Appendix.

Similarly, there exists a nonnegative random variable which we denote  $J'(\theta^-)$ such that $\lim_{n\to\infty} J'(\theta_n)= J'(\theta^-)$ a.s.\, This random $J'(\theta^-)$ satisfies similar properties as $J(\theta^-)$.
%
%

Since $(J(\theta_n))$ is uniformly integrable, we have $\lim_{n \to + \infty} E[J(\theta_n){{\bf 1}}_A] 
= E[J(\theta^-){{\bf 1}}_A]$.
The problem thus reduces to prove that 
\begin{equation}\label{conclusion}
E[\left(J(\theta) - J(\theta^-)\right){{\bf 1}}_A] \geq 0.
\end{equation}

Let $B:= \left\{\,E[J(\theta) \, |\, {\cal F}_{\theta^-}] < J(\theta^-) \, \right\}\cap A$. 
Since $J$ is a supermartingale family, we have $E[J(\theta) \, |\, {\cal F}_{\theta^-}] \leq J(\theta^-)$ a.s.\,on  $A$. It follows that $E[J(\theta) \, |\, {\cal F}_{\theta^-}] = J(\theta^-)$ a.s.\,on  $A\setminus B$. We thus have
\begin{eqnarray*}
E\left[\left(J(\theta) - J(\theta^-)\right){{\bf 1}}_A\right]  &=& E\left[  \left( E[J(\theta) \, |\, {\cal F}_{\theta^-}] - J({\theta^-}) \right){{\bf 1}}_A \right]\\
&=& E\left[ \left( E[J(\theta) \, |\, {\cal F}_{\theta^-}] - J(\theta^-)\right) {{\bf 1}}_{ B}\right]\\
&=& E\left[ \left( J(\theta) - J(\theta^-)\right) {{\bf 1}}_{ B}  \right]
\end{eqnarray*}

For each $p $ and for each $\lambda \in ]0,1[$\,, let us define 
$$\tau^{\lambda} (\theta_p) :=
\essinf \{\,\tau \in \mathcal{T}_{\theta_p}\, , \,\lambda J(\tau) \leq J'(\tau)+ \xi(\tau) \,\,\mbox {a.s.} \,\}$$
By Lemma \ref{Propriete 2}, the sequence of stopping times $(\tau^{\lambda} (\theta_p))_{p \in \N}$ announces $\theta$ on $B$.

\noindent Moreover, by Lemmas \ref{Propriete 3} and \ref{sull}, and since $J'$ is uniformly integrable,
\begin{eqnarray*}\label{premier}
E[ J(\theta^-){{\bf 1}}_{B}] &=&   \sup_{\lambda \in ]0,1[} \limsup_{p \to \infty} E[(J' +\xi)(\tau^{\lambda} (\theta_p)) {{\bf 1}}_{B}] \\
 &=&E[ J'(\theta^-){{\bf 1}}_{B}]+ \sup_{\lambda \in ]0,1[} \limsup_{p \to \infty} E[\xi(\tau^{\lambda} (\theta_p)) {{\bf 1}}_{B}] \\
&\leq & E[ J'(\theta^-){{\bf 1}}_{B}]+E[ \xi(\theta){{\bf 1}}_{B}],
\end{eqnarray*}
where the last inequality follows from the inequality 
\[\limsup_{p \to \infty} E[\xi( \tau^{\lambda} (\theta_p) ){{\bf 1}}_{B}  ]= \limsup_{p \to \infty} E[\xi( \tau^{\lambda} (\theta_p) \wedge \theta){{\bf 1}}_{B}  ]\leq E[\xi(\theta) {{\bf 1}}_{B}  ],\] due to the strong left-USCE property of $\xi$ (see (\ref{uscef})).
It follows that 
\begin{eqnarray*}
E\left[ \left( J(\theta) - J(\theta^-)\right) {{\bf 1}}_{B}  \right] &\geq& E\left[ \left( J(\theta) - J'(\theta^-)- \xi(\theta) \right) {{\bf 1}}_{B}  \right].
\end{eqnarray*}

Let $B^{'}:= \left\{\,E[J'(\theta) \, |\, {\cal F}_{\theta^-}] < J'(\theta^-) \, \right\}\cap A$. 
Suppose now that we have shown that
\begin{equation}\label{empty}
B \cap B^{'} = \emptyset \;\;\; {\rm a.s.}
\end{equation}

This  yields that $B \subset (B^{'})^c$ a.s.\,, which implies that 
\[ E\left[J'(\theta) \, |\, {\cal F}_{\theta^-}\right] = J'(\theta^-)  \;\;\;{\rm  a.s.} \;\;{\rm  on} \,\, B.\]
Hence,
\begin{eqnarray*}
E\left[ \left( J(\theta) - J(\theta^-)\right) {{\bf 1}}_{B}  \right] 
&\geq& E\left[ \left( J(\theta) - J'(\theta^-)- \xi(\theta) \right) {{\bf 1}}_{B}  \right]\\
& =& \quad E\left[ \left( J(\theta) - J'(\theta)- \xi(\theta) \right) {{\bf 1}}_{B}   \right] \geq 0  \;\;\;{\rm  a.s.}\,,
\end{eqnarray*}
since $J(\theta) \geq J'({\theta})  + \xi({\theta})$  a.s.
Hence, $J$ is left-USCE. By similar arguments, we have that $J'$ is also left-USCE.

It remains to prove (\ref{empty}).
Let $C:= B \cap B^{'}$. By Lemma \ref{Propriete 3}, since $\xi$ and $-\zeta$ are strong left-USCE, we have
\begin{eqnarray*}
E[ J(\theta^-){{\bf 1}}_C] &=&  E[ J'(\theta^-){{\bf 1}}_C]+ \sup_{\lambda \in ]0,1[} \limsup_{p \to \infty} E[\xi(\tau^{\lambda} (\theta_p)) {{\bf 1}}_{C}] \\
&\leq & E[ J'(\theta^-){{\bf 1}}_{C}]+E[ \xi(\theta){{\bf 1}}_{C\cap \{\theta< T\}}] + E[ \xi(T^-){{\bf 1}}_{C\cap \{\theta=T\}}] ;
\end{eqnarray*}
\begin{eqnarray*}
E[ J'(\theta^-){{\bf 1}}_C] &=&  E[ J(\theta^-){{\bf 1}}_C]- \inf_{\lambda \in ]0,1[} \liminf_{p \to \infty} E[\zeta(\sigma^{\lambda} (\theta_p)) {{\bf 1}}_{C}] \\
&\leq & E[ J(\theta^-){{\bf 1}}_{C}]-E[ \zeta(\theta){{\bf 1}}_{C\cap \{\theta< T\}}] - E[ \zeta(T^-){{\bf 1}}_{C\cap \{\theta=T\}}] .
\end{eqnarray*}
By adding the two above inequalities, we get 
\[0 \leq E[ (\xi(\theta)- \zeta(\theta)){{\bf 1}}_{C\cap \{\theta< T\}}]+  E[(\xi(T^-)- \zeta(T^-)){{\bf 1}}_{C\cap \{\theta=T\} }],\]
which, with the inequality $\zeta \geq \xi $\,, leads to $\xi(\theta)=\zeta(\theta)$ a.s.\,on $C\cap \{\theta< T\}$ and $\xi(T^-)=\zeta(T^-)$ a.s.\,on $C\cap \{\theta= T\}$. Since $\xi(T^-)<\zeta(T^-)$  a.s. and since, by Lemma 
\ref{DM}, 
$\{ \xi(\theta) = \zeta(\theta), \theta<T \} \cap A = \emptyset$ a.s.\,, it follows that $P(C)=0$. \end{proof}

\begin{remark} \label{strongMM} Note that, by similar arguments as those used in the proof of Proposition \ref{Mo} and since $ 0 \leq J'  \leq  J + \xi^-$ and $ 0\leq J \leq  J'  - 
\zeta^+$, the following property holds:
when $\xi\in \S^-$ and $\zeta\in \S^+$ are integrable, the family $J$ is uniformly integrable if and only if the family $J'$ is uniformly integrable.
An additional property is provided in the next section.
\end{remark}
\section{Complementary results}\label{section.moko}

\subsection{About Mokobodski's condition}
 Proposition \ref{Y} shows that condition $J(0)<+\infty$ implies that Mokobodski's condition holds.  Indeed $J$ and $J'$ are then two nonnegative a.s. finite supermartingales such that $\xi\le J-J'\le\zeta$. In the present section, we prove that these conditions are in fact equivalent.

We first show that 
the families $J$ and $J'$ can be characterized as follows.
\begin{proposition} The families $ J $ and $ J'$ are minimal in the following sense: if $H$ and $H'$ are two nonnegative supermartingale families such that $H \geq H' + \xi $ and $H' \geq H - \zeta$,
then we have $  J  \leq H$ and $ J' \leq H'$.
\end{proposition}
\begin{proof}Let $H $ and $H' $ be two nonnegative supermartingale families such that $H \geq H' + \xi$ and 
$H' \geq H - \zeta$. Let us first show that for each $n \in \N$\,,
\begin{equation}\label{encadre}
J_n \leq H\quad {\rm and} \quad J'_n \leq H'.
\end{equation}
by induction. It clearly holds for $J_0$ and $J'_0$. Let us suppose that, for some fixed $n \in \N$, inequalities (\ref{encadre}) hold. Using the inequality $H' + \xi$ $\leq$ $H$,  we thus derive that $J'_n + \xi \leq H' + \xi$ $\leq$ $H$ . 
Since the operator $\mathcal{R}$ is non decreasing, we get $J_{n+1}= \mathcal{R}(J'_n + \xi  ) \leq \mathcal{R}( H  )$. Now, since $H$ is a supermartingale, 
by the second assertion of Proposition \ref{pro.snell}, we have $\mathcal{R}(H)=H$, and hence $J_{n+1} \leq  H  $.
By similar arguments, we also have $J'_{n+1}\leq H' $, which ensures that Property (\ref{encadre}) holds at rank $n+1$.\\
By letting $n$ tend to $+\infty$ in (\ref{encadre}), we get that $ J  \leq H$ and $ J' \leq H'$, which ends the proof.
\end{proof}
This proposition, together with Proposition \ref{Y}, yields the following equivalence property.
\begin{proposition}\label{Mo}The condition $J(0) < +\infty$ (or equivalently $J'(0) < +\infty$) is equivalent to the {\em Mokobodski's condition}, that is, there exist two nonnegative a.s. finite supermartingale families $H$ and $H'$  such that
$$ \xi \leq H-H' \leq \zeta.$$
\end{proposition}
%
We also provide the following property, which completes Remark \ref{strongMM}.
\begin{proposition} \label{strongM}
The following conditions are equivalent
\begin{itemize}
\item  The family $J$ is uniformly integrable.
\item  The family $J'$ is uniformly integrable.
\item 
The {\em strong Mokobodski condition} holds that is, there exist two nonnegative supermartingale families $H$ and $H'$ 
uniformly integrable such that 
\[ \xi \leq H-H' \leq \zeta.\]
\end{itemize}
\end{proposition}

\begin{remark}
Let us note that a weak point of the Mokobodski condition, is that it is quite difficult to check. Our approach is more constructive, since $J$ and $J'$ are always well defined as the nondecreasing limits of nonnegative supermartingale families.
\end{remark}

\subsection{The right-continuous in expectation case} 
Let us now introduce the following definition. 
\begin{definition}
An admissible family $\left( \phi ( \theta ), \theta \in \T  \right)$ is said to be \emph{right-continuous in expectation along stopping times (right-CE)} if for all $\theta \in \T$ and for all sequences of stopping times $ (\theta_n)_{n \in \N}$ such that $\theta^n\downarrow \theta$,
$E[\phi(\theta) ]=\lim_{n\to \infty} E[\phi(\theta_n)]$.

\end{definition}
We first show that the limit of a non decreasing sequence of right-CE supermartingale families is also a right-CE supermartingale family. More precisely, the following property holds.
\begin{lemma}\label{limitsuperbis}Let  $(\phi_n)_{n \in \N}$ be a non decreasing sequence of  right-CE nonnegative supermartingale families.
The family $\phi$ defined for each $\theta$ $\in$ $\T$ by $\phi(\theta)= \lim_{n \to \infty} \phi_n(\theta)$ a.s. is then a right-CE supermartingale family.
\end{lemma}
\noindent Note that in order to define $\phi(\theta)(\omega)$ for each $\omega$, it suffices to set $\phi(\theta)(\omega):= \limsup_{n \to \infty} \phi_n(\theta)(\omega)$
\begin{proof}{} By Lemma \ref{limitsuper}, we already know that  the family $\phi$ is a supermartingale family. It remains to show that it is  right-CE.
Let $\theta \in \T$ and let $ (\theta_p)_{p \in \N}$ be a sequence of stopping times such that $\theta_p\downarrow \theta$.  
By the monotone convergence theorem and the right-CE property of $\phi_n$, we have
\begin{align*}
\lim_{p\to \infty} \uparrow E[\phi(\theta_p)] &= \lim_{p\to \infty} \uparrow E[ \lim_{n\to \infty} \uparrow \phi_n(\theta_p)]\\
&=  \lim_{p\to \infty} \uparrow  \lim_{n\to \infty}  \uparrow E[\phi_n(\theta_p)]=  \lim_{n\to \infty} \uparrow  \lim_{p\to \infty}  \uparrow E[\phi_n(\theta_p)]\\
&=  \lim_{n\to \infty} \uparrow   E[\lim_{p\to \infty}  \uparrow \phi_n(\theta_p)]= E[\phi(\theta) ].
\end{align*}  
The proof is thus complete.
\end{proof}

\begin{remark}
Note also that, using the above lemma together with the well-known result of aggregation of right-CE supermartingales (see Th. 3.13 in Karatzas and Shreve (1994)), one can easily derive the analogous result for processes (see Th. 18 ch. VI in Dellacherie Meyer (1980) and its quite technical proof). 
\end{remark}

\begin{proposition}
Suppose that $J (0)< + \infty$ and that the families $\xi$ and $-\zeta$ are right-CE.
Then, the families $J$ and $J'$ are right-CE. Also, $Y(=J-J')$ is the common value function of the game problem and is right-CE.
\end{proposition}
\begin{proof}{}
Recall that by a classical result of optimal stopping theory (see Lemma 2.13 in El Karoui (1981)), the value function associated with a right-CE reward family is right-CE. This ensures that by induction, for each $n$, $J_n$ and $J'_n$ are right-CE. Since $J =   \lim_{n\to \infty} \uparrow J_n$ and 
$J' =   \lim_{n\to \infty} \uparrow J'_n$, by Lemma \ref{limitsuperbis}, $J$ and $J'$ are  right-CE.\\
Moreover, by Theorem \ref{saun}, since $\xi$ and $-\zeta$ are right-CE and hence right-USCE, it follows that $Y=  \underline V = \overline V$ that is, $Y$ is the common value function of the game problem. Also, since $Y=J-J'$, it is  right-CE.
\end{proof}

\subsection{Remarks  about some works related to Dynkin Games}

In \cite{LM}, Lepeltier and Maingueneau do not suppose that  Mokobodski's  condition holds. However, the game is proven to have a value under the stronger regularity hypothese that   $(\xi_t)$ and $(\zeta_t)$ are right-continuous.  Note also that their approach does not provide a construction of the common value function. Let us underline that this approach relies on very sophisticated results of the 
General Theory of Processes and Optimal Stopping Theory, even stronger than in \cite{A}.\\
\indent The Dynkin  game problem and its links with reflected backward stochastic differential equations  (RBSDEs) has also received much attention  (see for example \cite{CK}, \cite{HHO} in the case of a Brownian filtration). These  authors suppose that $\xi < \zeta$ but do not suppose that  Mokobodski's condition holds; they provide an existence result for the RBSDE, via a penalization method. When the coefficient of the RBSDE is equal to zero, this result ensures that Mokobodski's condition holds.
\\
\indent In order to complete this brief review, note that in \cite{TV},  Touzi  and Vieille study a Dynkin Game of a different nature, as the set of controls is larger than the set of stopping times. Indeed, one can easily find some examples of  games that are fair in their case, but which are not  in the case when the controls are given by stopping times.

\appendix
\section{Appendix}
\subsection{Some results on optimal stopping in the framework of admissible families}\label{subsec.opt}

\noindent Let $\phi= (\phi(\theta),\theta\in \T)$ be an admissible family called {\em reward} in $\mathcal{S}^-$, that is, satisfying the following integrability condition:
\begin{equation}\label{IC}
E[\esssup_{\theta\in \T} (\phi(\theta))^{-} ] < + \infty.
\end{equation}

For $\theta \in \T$, the {\em value function at time $\theta$} is given by
\begin{equation*}
v(\theta)=\esssup_{\tau \in \T_{\theta}} E[\phi(\tau) \, |\,\F_{\theta}].
\end{equation*}

This optimal stopping problem clearly reduces to the case of a nonnegative reward, which has been studied by Kobylanski and Quenez (2011) in the framework of families of random variables.

More precisely, define $X(\theta):=E[\esssup_{\tau \in \T} (\phi(\tau))^- \, |\,\F_{ \theta }]$ and $\bar \phi (\theta):= \phi(\theta) + X(\theta)$.
Note that  $X=(X(\theta),\theta\in \T)$ is a martingale family. The new reward family $\bar \phi$ is nonnegative and the associated new value function $\bar v$ satisfies
$\bar v(\theta)= v(\theta) + X(\theta)$ a.s.\,The optimal stopping problem associated with the reward $\bar \phi$ can be thus solved by translation.

\begin{proposition}\label{prop.snell} Let $\phi\in \mathcal{S}^-$.
The value function  family $v$ associated to $\phi$ is equal to the Snell envelope of $\phi$, that is  $v=\mathcal{R}(\phi)$.
\end{proposition}

 The following proposition gives some useful properties of the Snell envelope operator.
 \begin{proposition}\label{pro.snell}
  The Snell envelope operator $\mathcal{R}$  is non decreasing on $\mathcal{S}^-$. \\
  For each $X\in \S^-$\,,  $\mathcal{R}(X)\ge X$, and equality holds if and only if $X$ is a supermartingale family.
  \\
  If the family $X$ is uniformly integrable, then $\mathcal{R}(X)$  is uniformly integrable.
\end{proposition}

The following property, known as the {\em optimality criterion} holds true.
\begin{proposition}\label{prop.opt}
Let $\theta$ $\in$ $\T$ and let $\hat \tau \in \T_{\theta}$ be such that $E[\phi(\hat \tau)]<\infty$.\\ 
The stopping time $\hat \tau$ is $\theta$-optimal, that is, $v(\theta) = E[\phi( \hat \tau) \, |\,\F_{\theta}]$ a.s.\,,
if and only if 
\begin{equation*}
v ( \hat \tau) = \phi( \hat \tau) \quad  {\rm a.s.} \quad  {\rm and} \quad ( v( \tau), \tau \in \T_{[\theta,\hat \tau]}) \quad  {\rm is \,\,a \,\,martingale \,\,family} .
\end{equation*}
\end{proposition}

The existence of $\varepsilon$-stopping time is provided under a right regularity condition. Let us introduce, for each $\theta$ $\in$ $\mathcal{T}$ and $\lambda$ $\in$ $]0,1[$,
the stopping time $\tau^{\lambda} (\theta)$ defined by 
\begin{equation} \label{taulambda}
 \tau^{\lambda} (\theta):=
\essinf \{\,\tau \in \mathcal{T}_{\theta}\, , \,\lambda v(\tau) \leq\phi(\tau) \,\,\mbox {a.s.} \,\}
\end{equation}
Note that, by definition, $\tau^{\lambda} (\theta)$ is non decreasing with respect to $\lambda$. Moreover, it is also non decreasing with respect to $\theta$, that is, for all $\theta, \theta'$ $\in \mathcal{T}$, we have $\tau^{\lambda} (\theta)$ $\geq$ $\tau^{\lambda} (\theta')$ a.s. on $\{ \theta \geq \theta'\}$.

\begin{theorem}\label{thm.epsopt}Suppose the reward family $\phi$ is right-USCE, $v$ is non negative and  
 $v(0)<\infty$.\\
For each $\theta$ $\in$ $\mathcal{T}$ and $\lambda$ $\in$ $]0,1[$, the stopping time $\tau^{\lambda} (\theta)$ satisfies 
\begin{equation}\label{lambdai}
\lambda v(\tau^{\lambda} (\theta) )\leq  \phi(\tau^{\lambda} (\theta) )\,\,\mbox {a.s.}
\end{equation}
Also, 
$\displaystyle \left( v(\tau), \tau \in \mathcal{T}_{[\theta, \tau^\lambda(\theta)]}\right)$ is a martingale family.\\
 In particular,  $\tau^{\lambda} (\theta)$ is $(1- \lambda)$-optimal for  $v(\theta)$, that is, $\lambda v(\theta) \leq  E[\phi \left( \tau^{\lambda} (\theta)\right)\, |\,
 {\cal F}_{\theta}]$ a.s.
\end{theorem}

\begin{proof}{} Let us introduce the \emph{strict value function at time $\theta$} associated with $\phi$, defined by
\begin{equation} \label{vplus}
v^+(\theta):={\rm ess}\sup_{\tau \in \mathcal{T}_{\theta^+}} E[\phi(\tau) \, |\,\mathcal{F}_{\theta}] \,.
\end{equation}
where $\mathcal{T}_{\theta^+}$ is the class of stopping times
$\tau \in \mathcal{T}_0$ with $\tau>\theta $ a.s. on $\{\theta<T\}$ and $\tau=T$ a.s. on $\{\theta=T\}$.
Also, we denote by $\bar v^+(\theta)$ the \emph{strict value function at time $\theta$} associated with $\bar \phi$.
First, by Proposition 1.9 in \cite{KQ}, we have $\bar v= \bar v^+ \vee \bar \phi$. Also, $\bar v^+= v^+ + X$ and $ v=  v^+ \vee  \phi$. 
By Proposition 1.12 in \cite{KQ}, $\bar v^+$ is right-CE. Now, $X$ is clearly right-CE since it is a martingale family. Hence, as $v^+ = \bar v^+ - X$, it follows that $v^+$ is right-CE. These properties, together with the same arguments as those used in the proof of Lemma 2.5 in \cite{KQ},
lead to inequality 
(\ref{lambdai}). Moreover, since $v \geq 0$, the same proof as that of Lemma 2.7 in \cite{KQ} shows that 
$$ v(\theta) = E [ v(\tau^{\lambda} (\theta)) \, |\, \mathcal{F}_{\theta}] \,\,\mbox {a.s.},$$
which ensures the second assertion of the theorem.
The last assertion follows from the two previous ones, by taking the conditional expectation given ${\cal F}_{\theta}$ in inequality (\ref{lambdai}).
\end{proof}

\begin{remark} 
When $v$ is not necessarily non negative, the above theorem does not hold. However, by Theorem 2.1 in \cite{KQ} applied to 
the non negative reward family $\bar \phi= \phi + X$ and its associated value function $\bar v= v+X$, the stopping time $\bar \tau^{\lambda} (\theta)$ defined, for each 
$\lambda \in ]0,1[$, by 
\begin{equation}\label{barretau}
\bar \tau^{\lambda} (\theta):=
\essinf \{\,\tau \in \mathcal{T}_{\theta}\, , \,\lambda \bar v(\tau) \leq \bar \phi(\tau) \,\, \mbox{ a.s.} \,\}
\end{equation}
satisfies the inequality $\lambda \bar v(\bar \tau^{\lambda} (\theta) )\leq  \bar \phi(\bar \tau^{\lambda} (\theta) )$ a.s.
Also, the inequality  $\lambda \bar v(\theta) \leq  E[\bar \phi \left(\bar \tau^{\lambda} (\theta)\right)\, |\,
 {\cal F}_{\theta}]$ holds a.s.\,, which yields that 
 $\lambda  v(\theta) \leq  E[ \phi \left( \bar \tau^{\lambda} (\theta)\right)\, |\,
 {\cal F}_{\theta}] + (1- \lambda) X(\theta)$ a.s.\, The stopping times $\bar \tau^{\lambda} (\theta)$, $\lambda \in ]0,1[$, can thus be also interpreted as $\epsilon$-stopping times, but they appear less tracktable than the stopping times $\tau^{\lambda} (\theta)$, $\lambda \in ]0,1[$.
  \end{remark}
\begin{remark} The above theorem can be applied, in the Dynkin game problem, to the value functions $J$ and $J'$ because they are non negative.
 \end{remark}
The existence of $\theta$-optimal stopping times is proven under additional left regularity. More precisely
\begin{theorem}\label{thm.exopt}
If the reward $\phi$ is USCE (that is right- and left-USCE) and $v=\mathcal{R}(\phi)$ satisfies $v(0)<\infty$, then, for each $\theta$ $\in$ $\mathcal{T}$\,,
the stopping time $ \tau_{*}(\theta)$ defined by
\begin{equation}\label{T.TAO}
 \tau_{*}(\theta):=\essinf\{\tau \in\mathcal{T}_{\theta}\,, v(\tau)= \phi(\tau) \; \mbox{ a.s.} \,\}
\end{equation}
is the minimal optimal stopping time  for $v(\theta)$. 
\end{theorem}
\begin{proof}
By the same arguments as those used in the proof of Theorem 2.9 in \cite{KQ}, applied here to the optimal stopping problem associated to reward $ \bar \phi$, and since 
$\tau_{*}(\theta)=\essinf\{\tau \in\mathcal{T}_{\theta}\,, \bar v(\tau)= \bar \phi(\tau) \; {\rm a.s.} \,\}$, we derive that 
$\tau_{*}(\theta)$ is $\theta$-optimal for $\bar v(\theta)$, and hence for $v(\theta)$.
\end{proof}
\begin{remark}
We also have $\tau_{*}(\theta)= \lim_{\lambda\uparrow 1} \uparrow  \bar \tau^\lambda(\theta)$ a.s.\,, 
where  $ \bar \tau^\lambda(\theta)$ is defined by (\ref{barretau}).
If $v \geq 0$, then $\tau_{*}(\theta)= \lim_{\lambda\uparrow 1} \uparrow  \tau^\lambda(\theta)$ a.s.\, where 
 $\tau^\lambda(\theta)$ is defined by (\ref{taulambda}).
\end{remark}
At last, recall some regularity results of supermartingale families, which are used in the proof of Theorem \ref{left-USCE}. First, one can easily show that a supermartingale family is right-USCE. 


Let now $\theta$ in $\T$. Recall that a non decreasing sequence of stopping times $(\theta_n)_{n \in \N}$ such that for each $n$\,, $\theta_n \leq \theta$ a.s.\,, is said to {\em announce} $\theta$ on $A$ $\subset$ $\Omega$ if 
\[ \theta_n \uparrow \theta \mbox{ a.s. on }A \; \mbox{ and }\theta_n<\theta \mbox{ a.s. on }A .\]
The following lemma, which is used in the proof of Theorem \ref{left-USCE}, holds. 
\begin{lemma} \label{sull}
Let $\big(u(\theta), \theta\in \T\big)$ be a supermartingale family in $\mathcal{S}^-$.\\ 
Let $\theta$ in $\T$. Suppose that $\theta$ is accessible on a subset $A$ of $\Omega$.\\
There exists an $\F_{\theta^-}$-measurable random variable $u(\theta^-)$, unique on $A$ (up to the equality a.s.\,), such that, for any non decreasing sequence $(\theta_n)_{n\in \N}$ announcing $\theta$ on $A$, one has
\[u(\theta^-)=\lim_{n\to\infty} u(\theta_n) \quad \mbox{a.s. on } A.\]
If $u(0) < +\infty$, then $u(\theta^-){{\bf 1}}_A$ is integrable.
\end{lemma} 


\begin{proof}{} This lemma corresponds to Lemma 4.8 in Kobylanski and Quenez (2011). For the convenience of the reader, we give the sketch of the proof.
 Without loss of generality, 
we can suppose that $u$ is nonnegative. Let $(\theta_n)_{n\in \N}$ be a non decreasing sequence  announcing $\theta$ on $A$. 
It is clear that $(u( \theta_n))_{ n\in \N}$ is a discrete nonnegative supermartingale relatively to the filtration $(\F_{\theta_n})_{n\in \N}$. By the well-known convergence theorem for discrete supermartingales, there exists a random variable $Z$ such that  
$(u( \theta_n))_{ n\in \N}$ converges a.s. to  $Z$. If $u(0) < +\infty$, then $Z{{\bf 1}}_A$ is integrable. We then set $u(\theta^-) := Z$.
It remains to show that this limit $u(\theta^-)$, restricted to $A$, does not depend on the sequence $(\theta_n)_{n \in \N}$. For this proof, one is refered to Kobylanski and Quenez (2011).
\end{proof}

We now complete the properties of $u(\theta^-)$. Let $\theta$ in $\T$. Let us precisely define $A(\theta)$, the union of the sets on which $\theta$ is accessible, called the {\em set of accessibility} of $\theta $  (see \cite{DM2}).\\
More precisely, 
for each non decreasing sequence of stopping times $(\theta_n)_{n \in \N}$, also denoted by $(\theta_n)$, we set $A[(\theta_n)]:= \{ \theta_n \uparrow \theta \; \mbox{ and }\theta_n<\theta \,\, \mbox{for all}\,\,n\, \}$.
Let $\mathcal{A}_{\theta}$ be the set of sequences $(\theta_n)_{n\in \N}$ such that $A[(\theta_n)]$ is non empty. If $\mathcal{A}_{\theta}$ is empty, we set $A(\theta) := \emptyset$. Otherwise,
we introduce the following random variable 
$$X:= {\rm ess}\sup_{(\theta_n) \in \mathcal{A}_{\theta}} {\bf 1}_{ A[(\theta_n)] }.$$
By classical results on the essential supremum (see Proposition VI.1.1  p121 in \cite{Neveu} ), there exists a sequence of elements of $\mathcal{A}_{\theta}$, denoted by $\{ (\theta^k_n)_{n \in \N} \,, \,k \in \N \}$, such that 
$X=\sup_{k \in \N} {\bf 1}_{ A[(\theta^k_n)]}$ a.s.\, The set $A(\theta)$ can then be clearly defined by 
$A(\theta) := \cup_{k \in \N} A[(\theta^k_n)]$ and belongs to ${\cal F}_{\theta^-}$. This together with the above lemma leads to the following result, which corresponds to Theorem 4.3  in \cite{KQ}.

\begin{theorem} 
A supermartingale system $(u(\theta), \theta\in T)$ in $\mathcal{S}^-$ is left-limited along stopping times at each stopping time $\theta$ $\in$ $T_{0^+}$, that is, there exists an ${\cal F}_{\theta^-}$-measurable random variable $u(\theta^-)$ such that, for any non decreasing sequence  of stopping times  $(\theta_n)_{n\in \N}$,
\[\phi(\theta ^-)=\lim_{n \to \infty} \phi(\theta_n) \mbox{ a.s. on }A[(\theta_n)]. \]
If $u(0) < +\infty$, then $u(\theta^-){\bf 1}_{A(\theta)}$ is integrable.\end{theorem}

\subsection{Two useful lemmas}

We now provide two lemmas which are used in the proof of Theorem \ref{left-USCE}. 

Let $\phi =(\phi(\theta), \theta\in \T)$ be a right-USCE in $\S^-$, set  $v = {\cal R}(\phi)$, and 
suppose $v(0) <  \infty$. 
 
Let $\theta$ $\in$ $\T$ and let $(\theta_p)_{p \in \N}$ in $\T$ such that $\theta_p \uparrow \theta$.

Suppose that the event $A: = \{\theta_p<\theta,\,\, \mbox{for all}\,\,p\, \}$ is non empty and define the event $B$ by  
$B:=\{ E [ v({\theta})  \, |\, {\cal F}_{\theta^-} ] < v({\theta^-})\} \cap A$. Now, the following lemma holds.
\begin{lemma} \label{Propriete 2}
The sequence of stopping times $(\tau^{\lambda} (\theta_p))_{p \in \N}$ announces $\theta$ on $B$.
\end{lemma}
 \begin{proof}{} This argument is detailed inside the proof of Theorem 4.16 in \cite{KQ}. For the reader's comfort we provide it in full extension.
Let us first show that for each $p $ and for each $\lambda \in ]0,1[$, $\tau^{\lambda} (\theta_p) < \theta$ a.s. on $B$, or equivalently that $B \cap \{\tau^{\lambda} (\theta_p) \geq \theta\} = \emptyset$ a.s.\\
Note first that $\displaystyle{\{\tau^{\lambda} (\theta_p) \geq \theta\} = \cap_q \{\tau^{\lambda} (\theta_p) \geq \theta_q \}}$. Hence, $\{\tau^{\lambda} (\theta_p) \geq \theta\}$ $\in$ $ \mathcal{F}_{\theta^-}\cap \vee_n \mathcal{F}_{\theta_n}$.\\
Now, since  $\displaystyle\left(v(\tau), \tau \in \mathcal{T}_{[ \theta_p,   \tau^{\lambda} (\theta_p)]} \right)$ is a martingale family, it follows that for each $q\geq p$\,,\\
$E[ v(\tau^{\lambda} (\theta_p))\, |\, \mathcal{F}_{\theta_q} ]= v(\theta_q)$ a.s. on $\{\tau^{\lambda} (\theta_p) \geq \theta_q \}$ and hence  on $\{\tau^{\lambda} (\theta_p) \geq \theta \}$. Therefore, by letting $q$ tend to $\infty$, 
\[
E[ v(\tau^{\lambda} (\theta_p))\, |\, \vee_n \mathcal{F}_{\theta_n} ]= v({\theta^-}) \quad {\rm a.s.}\,\, {\rm on} \quad \{\tau^{\lambda} (\theta_p) \geq \theta \}\cap  A.\]
Now, as $(\theta_p)$ announces $\theta$ on $A$, by a classical measurability property (see Lemma A1 in \cite{KQ}), we have
\begin{equation*}\label{uuuter}
E[ v(\tau^{\lambda} (\theta_p))\, |\, \vee_n \mathcal{F}_{\theta_n} ]= E[ v(\tau^{\lambda} (\theta_p))\, |\, \mathcal{F}_{\theta^-} ] \quad {\rm a.s.}\,\, {\rm on}\,\, A.
\end{equation*}
It follows that, on the one hand,
\begin{equation}\label{uuu}
E[ v(\tau^{\lambda} (\theta_p))\, |\, \mathcal{F}_{\theta^-} ]= v({\theta^-}) \quad {\rm a.s.}\,\, {\rm on} \quad \{\tau^{\lambda} (\theta_p) \geq \theta \}\cap  A.
\end{equation}
On the other hand, since $\displaystyle \left(v(\tau), \tau \in \mathcal{T}_{[ \theta_p,   \tau^{\lambda} (\theta_p)]} \right)$ is a martingale, we have $ E[ v(\tau^{\lambda} (\theta_p))\, |\, \mathcal{F}_{\theta} ]= v(\theta)$ a.s. on $\{\tau^{\lambda} (\theta_p) \geq \theta\}$. By taking the condidional expectation with respect to $\mathcal{F}_{\theta^-}$, we derive that 
\[ E[ v(\tau^{\lambda} (\theta_p))\, |\, \mathcal{F}_{\theta^-} ]= 
E[ v(\theta)\, |\, \mathcal{F}_{\theta^-} ] <  v(\theta^-) \quad {\rm a.s.}\,\, {\rm on} \quad B \cap \{\tau^{\lambda} (\theta_p) \geq \theta\},\]
which, with equality (\ref{uuu}), yields that $B\cap \{\tau^{\lambda} (\theta_p) \geq \theta\} = \emptyset$ a.s.

It follows that for each $p$, $\theta_p \leq \tau^{\lambda} (\theta_p) < \theta$ a.s.\, on $B$. Hence, $\tau^{\lambda} (\theta_p) \uparrow \theta$ a.s. on $B$ as $p$ tends to $+ \infty$. In other words,  the sequence $(\tau^{\lambda} (\theta_p))_{p \in \N}$ announces $\theta$ on $B$. 
\end{proof}

\begin{lemma} \label{Propriete 3} 
 If the family $\phi$ is uniformly integrable, we have
\begin{equation*}
E[ v(\theta^-) {{\bf 1}}_{B}] =   \sup_{\lambda \in ]0,1[} \limsup_{p \to \infty} E[\phi(\tau^{\lambda} (\theta_p)) {{\bf 1}}_{B}].
\end{equation*}
Also, for each $D \in {\cal F}_{\theta^-}$\,, this equality holds with $B$ replaced by $B \cap D$.
\end{lemma}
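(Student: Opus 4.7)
The plan is to establish both directions of the claimed equality simultaneously via a sandwich argument. Without loss of generality I assume $\phi \geq 0$ (hence $v \geq 0$) using the translation trick recalled in the appendix. The starting point is the pointwise estimate
\begin{equation*}
\lambda v(\theta^\lambda(\theta_p)) \leq \phi(\theta^\lambda(\theta_p)) \leq v(\theta^\lambda(\theta_p)) \quad \as,
\end{equation*}
where the left inequality is (\ref{connu}) and the right one is $\phi \leq v$, the defining property of the Snell envelope. Integrating against $\one_{A \cap B}$ yields
\begin{equation*}
\lambda \E[v(\theta^\lambda(\theta_p)) \one_{A \cap B}] \leq \E[\phi(\theta^\lambda(\theta_p)) \one_{A \cap B}] \leq \E[v(\theta^\lambda(\theta_p)) \one_{A \cap B}],
\end{equation*}
so everything boils down to showing $\lim_p \E[v(\theta^\lambda(\theta_p)) \one_{A \cap B}] = \E[v(\theta^-) \one_{A \cap B}]$; taking $\limsup_p$ and then $\sup_{\lambda \in ]0,1[}$ in the display above will then yield both desired inequalities at once.

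To identify the limit, I would first show that the sequence $p \mapsto \theta^\lambda(\theta_p)$ is non-decreasing: since $\theta_p \leq \theta_{p+1}$, the essinf defining $\theta^\lambda(\theta_{p+1})$ is taken over a subset of the one defining $\theta^\lambda(\theta_p)$. Combined with $\theta^\lambda(\theta_p) \geq \theta_p$ and $\theta_p \uparrow \theta$, this forces $\theta^\lambda(\theta_p) \uparrow \theta$ a.s. On $A \cap B$, Lemma \ref{Propriete 2} moreover ensures $\theta^\lambda(\theta_p) < \theta$ for every $p$, so that $(\theta^\lambda(\theta_p))_p$ actually announces $\theta$ on $A \cap B$. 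Proposition \ref{sull} applied to the nonnegative supermartingale family $v$ (with $S = \theta$, accessible on $A \supset A \cap B$) then delivers
\begin{equation*}
v(\theta^\lambda(\theta_p)) \to v(\theta^-) \quad \text{a.s. on } A \cap B.
\end{equation*}

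To upgrade this to $L^1$ convergence, I would invoke the hypothesis that $\phi$ is of class $\cal D$ together with the remark in the appendix that $v = \mathcal{R}(\phi)$ is then of class $\cal D$ as well: the family $\{v(\theta^\lambda(\theta_p))\}_p$ is uniformly integrable, and the a.s. convergence above gives $\lim_p \E[v(\theta^\lambda(\theta_p)) \one_{A \cap B}] = \E[v(\theta^-) \one_{A \cap B}]$. Since $v \geq 0$, one has $\sup_{\lambda} \lambda \E[v(\theta^-) \one_{A \cap B}] = \E[v(\theta^-) \one_{A \cap B}]$, and the sandwich yields the announced equality. The extension to $A \cap B \cap D$ for $D \in \F_{\theta^-}$ is automatic: shrinking the integration set leaves unchanged the monotonicity and announcing property of $(\theta^\lambda(\theta_p))_p$, the a.s. convergence on the smaller event, and the uniform integrability.

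The main obstacle I foresee is the correct identification of the limiting stopping time: monotonicity of $p \mapsto \theta^\lambda(\theta_p)$ is elementary, but Lemma \ref{Propriete 2} is crucial to guarantee the strict inequality $\theta^\lambda(\theta_p) < \theta$ on $A \cap B$, which is precisely what allows Proposition \ref{sull} to give $v(\theta^-)$ (and not simply $v(\theta)$) as the limit.
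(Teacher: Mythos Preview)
Your proof is correct and follows the same sandwich-plus-announcing approach as the paper (which uses Fatou's lemma rather than uniform integrability for the lower bound, a cosmetic difference). One minor imprecision: from $\theta^\lambda(\theta_p) \geq \theta_p \uparrow \theta$ and monotonicity you only get $\lim_p \theta^\lambda(\theta_p) \geq \theta$ a.s., not equality --- the convergence to $\theta$ (and the announcing property) holds on $A \cap B$ thanks to Lemma~\ref{Propriete 2}, which is exactly what you use.
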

\begin{proof}{} Let $\lambda \in ]0,1[$ and let $p \in \N$. By (\ref{lambdai}), we have
$
\lambda v(\tau^{\lambda} (\theta_p) ) \leq \phi(\tau^{\lambda} (\theta_p) )$ a.s.\,
It follows that 
\[\lambda E[v(\tau^{\lambda} (\theta_p)){{\bf 1}}_{B} ] \leq  E[\phi(\tau^{\lambda} (\theta_p)) {{\bf 1}}_{B}].\]
By letting $p$ tend to $+ \infty$, we derive that
\[\lambda  \limsup_{p \to \infty} E[v(\tau^{\lambda} (\theta_p) ) {{\bf 1}}_{B}]\leq \limsup_{p \to \infty} E[\phi(\tau^{\lambda} (\theta_p)) {{\bf 1}}_{B}].\]
Now, by the above lemma, the sequence $(\tau^{\lambda} (\theta_p))_{p \in \N}$ announces $\theta$ on $B$. Hence, by Lemma \ref{sull}, it follows that $\lim_{p\to\infty} v(\tau^{\lambda} (\theta_p))= v(\theta^-)$ a.s. on $B$. 
Using Fatou's lemma, we get 
\[ E[v(\theta^{-} ) {{\bf 1}}_{B}] \leq  \liminf_{p \to \infty} E[v(\tau^{\lambda} (\theta_p) ) {{\bf 1}}_{B}] \leq  \limsup_{p \to \infty} E[v(\tau^{\lambda} (\theta_p) ) {{\bf 1}}_{B}].\]
The two above inequalities yield that 
\[\lambda  E[v(\theta^{-} ) {{\bf 1}}_{B}] \leq \limsup_{p \to \infty} E[\phi(\tau^{\lambda} (\theta_p)) {{\bf 1}}_{B}],\]
and this holds for each $\lambda \in ]0,1[$. Hence, by taking the supremum over $\lambda \in  ]0,1[$, we get 
\[E[ v(\theta^-) {{\bf 1}}_{B}] \leq   \sup_{\lambda \in ]0,1[} \limsup_{p \to \infty} E[\phi(\tau^{\lambda} (\theta_p)) {{\bf 1}}_{B}].\]
The same arguments show that for each $D \in {\cal F}_{\theta^-}$, this inequality holds with $B$ replaced by $B \cap D$.\\
At last, by using the inequality $v \geq \phi$ and the fact that $v$ is uniformly integrable, we derive that the inequality is an equality. 
\end{proof}

\section{Case of payoffs given by processes}
In this section, by using the results provided in this paper, we derive the corresponding results in the case of processes.  We underline that if a progressive process $(\phi_t)$ gives naturally way to an admissible family $(\phi_\theta, \theta \in \mathcal{T})$, the converse is not always possible. Indeed for a given admissible family $(\phi(\theta),  \theta \in \mathcal{T})$, there does not 
always exist a progressive process $(\phi_t)$ which {\em aggregates} $(\phi(\theta),  \theta \in \mathcal{T})$, that is, such that $\phi_\theta=\phi(\theta)$ a.s. for all $\theta \in \mathcal{T}$. The main point is thus to prove the existence of progressive processes $(J_t)$ and $(J'_t)$ that aggregate the families $(J(\theta), \theta\in \mathcal{T})$ and $(J'(\theta), \theta\in \mathcal{T})$.

Let  $ (\xi_t)$ and $(\zeta_t)$ be  progressive processes such that $\xi_T=\zeta_T=0$ and  such that 
\[  E\left[\esssup_{t\in[0, T]} \xi_t^-\right] <+\infty\quad \mbox{and} \quad E\left[\esssup_{t\in [0,T]} \zeta_t^+\right]<+\infty.   \]
As above, we can define the families $(J_n(\theta), \theta\in \mathcal{T})$ and $(J'_n(\theta), \theta\in \mathcal{T})$ as well as  $(J(\theta), \theta\in \mathcal{T})$ and $(J'(\theta), \theta\in \mathcal{T})$, associated to the admissible families $(\xi_\theta, \theta \in \T)$ and $(\zeta_\theta, \theta \in \T)$. Suppose now that $J(0)<+\infty$. Using Proposition B.1. in \cite{KQ}, one can show by induction that, for each $n$, there exist supermartingale processes  $(J_{n,t})$ and $(J_{n,t}')$ such that, for all $\theta\in \T$, $J_{n,\theta}=J_n(\theta)$ and  $J'_{n,\theta}=J'_n(\theta)$ a.s.  We can then define the processes $(J_t)$ and $(J'_t)$ by $J_t:=\lim_{n\to \infty}\uparrow J_{n,t}$ and $J'_t:=\lim_{n\to \infty}\uparrow J'_{n,t}$. Clearly, for each $\theta\in \T$, $J_\theta=\lim_{n\to \infty}\uparrow J_{n,\theta}=\lim_{n\to \infty}\uparrow J_n(\theta)=J(\theta)$ a.s., and similarly $J'_\theta=J'(\theta)$ a.s.   
We thus have proven the following result. 
\begin{proposition}\label{prop.proc} Suppose that $J(0)<+\infty$.
There exists two  nonnegative supermartingale processes $(J_t)$ and $(J'_t)$ such that for all $\theta\in \T$, $J_\theta=J(\theta)$ and  $J'_\theta=J'(\theta)$ a.s.\,
Moreover,  
\begin{equation}\label{eee}
  J  _{ \theta } = \esssup_{\tau \in \T_{\theta}} E \left[  J' _{ \tau } + \xi _{ \tau } \given \F_{\theta} \right]\,\, \mbox{a.s.} \quad {\rm and}\quad
 J' _{ \theta } = \esssup_{\sigma \in \T_{\theta}} E \left[  J _{\sigma } - \zeta _{\sigma }\given \F_{\theta} \right] \,\, \mbox{a.s.} 
\end{equation}
\end{proposition}

This property together with Theorem \ref{saun} gives the following result.

\begin{theorem}\label{thm.B}
Suppose that $ J_0< + \infty$ and that the processes $(\xi_\theta, \theta \in \T)$ and $(-\zeta_\theta, \theta \in \T)$ are right-USCE.
Then, the game is fair and the common value function process is equal to  $Y_t:=J_t-J_t'$. 
\end{theorem}

\begin{remark}
Note that if a process $(\phi_t)$ is right-upper semicontinuous (that is, for almost every $\omega$, the function $t \mapsto \phi_t (\omega)$ is right-upper semicontinuous), then $(\phi_\theta, \theta\in \T)$ 
 is right-USC (along stopping times). If, moreover, the process  $(\phi_t)$ is of class $\mathcal{D}$ (or equivalently the family $(\phi_\tau, \tau \in \T)$ is uni\-for\-mly integrable), then  $(\phi_\theta, \theta\in \T)$  is right-USCE.  
\end{remark}

Recall that a progressive process $(\phi_t )$ is said to be of class 
$\mathcal{D}$ if the associated family of 
random variables $(\phi_\theta, \theta\in \T)$ is uniformly integrable. The following existence result holds.
\begin{theorem}\label{thm.left-USCE}   
Suppose that $(J_t)$ and $( J'_t)$ are of class $\mathcal{D}$ and that the families  $(\xi_\theta, \theta\in \T)$ and $(-{\zeta}_\theta, \theta\in \T)$ are right-USCE and strong left-USCE. Suppose also that for each predictable stopping time $\tau \in \T$,  we have $ \{ \xi_\tau = \zeta_\tau \} = \emptyset $ a.s. on $\{\tau <T\}$\, and that $(\xi_t)$ and $( \zeta_t)$ are left-limited at $T$, with
 $ \{ \xi_{T^-} = \zeta_{T^-} \} = \emptyset $ a.s. 
Then, the game is fair and, for each $\theta \in \T$\,, the pair of  stopping times $\left(  \tau_{*} (\theta),\sigma_* (\theta) \right)$, defined by (\ref{tauetoile}) and (\ref{sigmaetoile}),  satisfies
\begin{equation}\label{tt}
 \tau_{*} (\theta)=
\inf \{\,t\ge \theta\, , \,  J _t =  J'_t+ \xi_t  \,\}
\quad \mbox{and} \quad  \sigma_* (\theta)=
\inf \{\,t\ge \theta\,\, , \, J'_t =  J_t- \zeta_t\ \,\}
\end{equation}
and is a $\theta$-saddle point for the criterion $I_{\theta}$. In particular 
$ \underbar{V}_\theta = \bar{V}_\theta  = I_{\theta} \left( \tau_{*} (\theta), \sigma_* (\theta)\right)$ a.s.\\
\end{theorem}
\begin{proof} Equalities (\ref{tt}) follow from Propositions B.5 and B.6 in \cite{KQ}. The result clearly follows 
from Corollary \ref{existence}. 
\end{proof}

\begin{remark}
The result still holds if instead of the assumption : $(\xi_t)$ and $( \zeta_t)$ are left-limited at $T$, with
 $ \{ \xi_{T^-} = \zeta_{T^-} \} = \emptyset $ a.s.\, , we only suppose that  $ \{\limsup_{t \rightarrow  T^-}\xi_t =\liminf_{t \rightarrow  T^-}\zeta_t  \} = \emptyset $ a.s.\, 
Note that if a process $(\phi_t)$ is upper semicontinuous and of class $\mathcal D$, then $(\phi_\theta, \theta\in \T)$ 
 is right-USCE and strong left-USCE. 
 
 We stress on that Theorem \ref{thm.left-USCE} follows from Corollary \ref{existence}, whose proof requires no sophisticated mathematical tools, contrary to that given in the previous literature  (cf. \cite{A} Theorem 4-3 p.31). Actually, the only result of the General Theory of Processes used above is that each right-CE supermartingale family can be aggregated by an RCLL supermartingale process.
\end{remark}

 Note that equalities (\ref{eee}) can be expressed in terms of processes, by using $\hat{\mathcal{R}}$, the Snell envelope 
operator acting on progressive processes, often used in the literature on Dynkin games problems. We recall below the definition of $\hat{\mathcal{R}}$, which is not as simple as that of $\mathcal{R}$, the Snell envelope 
operator acting on admissible families. It requires to introduce some supermartingale families, and then to aggregate them so that the operator $\hat{\mathcal{R}}$ acts on processes.\\
\indent First, a progressive process $(\phi_t )$ is said to be a {\em strong supermartingale} if the associated family 
$(\phi_\theta, \theta\in \T)$ is a supermartingale family.\\
\indent For each progressive process $(\phi_t )$, the smallest strong supermartingale process greater or equal to $(\phi_t )$, 
when it exists, is called the {\em Snell envelope process} of $(\phi_t )$ and is denoted by $\hat{ \mathcal{R}}[(\phi_t)]$. 
By using Proposition \ref{prop.snell}, one can show that, for each progressive process $(\phi_t )$ satisfying $E\left[\esssup_{t\in[0, T]} \phi_t^-\right] <+\infty$
 and $\sup_{\theta\in \T} E[\phi_{\theta} ] < \infty$, the associated value function process $(v_t)$ aggregating 
the family $(v(\theta), \theta\in \T)$ (defined as in Proposition B.1. in \cite{KQ}), is equal to the Snell envelope process of $(\phi_t )$, that is, $(v_t)= \hat{ \mathcal{R}}[(\phi_t)]$. Equalities (\ref{eee}) thus lead to the equalities  $(J_t)=\hat{\mathcal{R}}[(J'_t+\xi_t)]$ and $(J'_t)=\hat{\mathcal{R}}[(J_t-\zeta_t)]$, which are well-known in the literature on the Dynkin game problem.

The above points underline again the relevance of the framework of admissible families to study the Dynkin game problem. 
\section{Complementary result} 
\begin{proposition}\label{ajoute} 
Suppose the families 
$J$ and $J'$ are uniformly integrable. Suppose also that the families $\xi$ and $\zeta$ are left-limited along stopping times with $\{\xi(\tau^-) = \zeta(\tau^-)\} = \emptyset$ a.s.\,, for each predictable stopping time $\tau $ $\in$ $\T$.
If $\xi$ and $-\zeta$ are right-USCE and strong left-USCE, then  the families $J$ and $J'$ are USCE and consequently,
for each $\theta \in \T$\,, the pair $\left(  \tau_{*} (\theta),\sigma_* (\theta) \right)$, defined by \eqref{tauetoile} and \eqref{sigmaetoile}, is a $\theta$-saddle point .
\end{proposition}

\begin{proof}The proof is the same as that of Theorem \ref{left-USCE} except the proof of equality (\ref{empty}) for a given stopping time $\theta$ $\in$ $\T$, which is slightly different. This equality actually follows from the same arguments as those used  at the end of Theorem \ref{left-USCE} on $C\cap \{\theta=T\}$.
%
\end{proof}

\subsection*{Aknowledgement} The authors thank the anonymous Referee for his helpful comments and suggestions which have much contributed to the present version.

\end{document}